\newtheorem{theorem}{Theorem}[section]
\newtheorem{corollary}[theorem]{Corollary}
\newtheorem{lemma}[theorem]{Lemma}
\newtheorem{proposition}[theorem]{Proposition}
\newtheorem{assumption}[theorem]{Assumption}
\theoremstyle{definition}
\newtheorem{remark}[theorem]{Remark}
\numberwithin{equation}{section}
\newcommand{\tx}[1]{\textnormal{#1}}
\newcommand{\OM}{\Omega}
\newcommand{\R}{\ensuremath \mathbb{R}}
\newcommand{\n}[1]{\|#1\|}
\newcommand{\balp}{\mathrm{A}}		
\newcommand{\N}{\ensuremath \mathbb{N}}
\newcommand{\f}{f_k}
\newcommand{\FMK}{F_k}
\newcommand{\FMG}{F_k^g}
\newcommand{\fs}{f_k^s}
\newcommand{\ef}{\varepsilon_f}
\newcommand{\eq}{\varepsilon_q}
\newcommand{\F}{\mathcal{F}}
\newcommand{\Pb}{\mathbb{P}}
\newcommand{\mo}{\mathbb{1}}
\newcommand{\me}{\mathbb{E}}
\newcommand{\trr}{\Delta}
\newcommand{\damiano}[1]{\textcolor{black}{#1}}
\newcommand{\Ve}{\mathcal{V}}
\DeclareMathOperator*{\argmin}{arg\,min}
\begin{document}
	
	\title{Stochastic trust-region and direct-search methods: A weak tail bound condition and reduced sample sizing}

\author{	
		F.~Rinaldi \thanks{Dipartimento di Matematica ``Tullio Levi-Civita'', Universit\`a
			di Padova, Italy
			(\tt{rinaldi@math.unipd.it}).}
		\and
		L. N. Vicente \thanks{Department of Industrial and Systems Engineering, Lehigh University, 200 West Packer       Avenue, Bethlehem, PA 18015-1582, USA. Support for
this author was partially provided by the Centre for Mathematics of the University of Coimbra under grant FCT/MCTES UIDB/MAT/00324/2020.}
		\and
		D. Zeffiro\thanks{Dipartimento di Matematica ``Tullio Levi-Civita'', Universit\`a di Padova, Italy
			({\tt zeffiro@math.unipd.it}).}   
	}
	
	\maketitle
	\begin{abstract}

Using tail bounds, we introduce a new probabilistic condition for function estimation in stochastic derivative-free optimization which leads to a reduction in the number of samples and eases algorithmic analyses. Moreover, we develop simple stochastic direct-search and trust-region methods for the optimization of a potentially non-smooth function whose values can only be estimated via stochastic observations. For trial points to be accepted, these algorithms require the estimated function values to yield a sufficient decrease measured in terms of a power larger than~1 of the algoritmic stepsize.

Our new tail bound condition is precisely imposed on the reduction estimate used to achieve such a sufficient decrease.
This condition allows us to select the stepsize power used for sufficient decrease in such a way to reduce the number of samples needed per iteration. 
In previous works, the number of samples necessary for global convergence
at every iteration~$k$ of this type of algorithms was $O(\Delta_{k}^{-4})$, where $\Delta_k$ is the stepsize or trust-region radius.
However, using the new tail bound condition, and 
under mild assumptions on the noise, one can prove that such a number of samples is only $O(\Delta_k^{-2 - \varepsilon})$, where $\varepsilon > 0$ can be made arbitrarily
small by selecting the power of the stepsize in the sufficient decrease test arbitrarily close to~$1$.
The global convergence properties of the stochastic direct-search and trust-region algorithms are established under the new tail bound condition.

\end{abstract}
	
	\section{Introduction}
	
	We consider the following  unconstrained optimization problem

	\begin{equation}\label{gen_prob}
		\min_{x \in \R^n} f(x),
	\end{equation}
where $f$ is locally Lipschitz continuous and possibly non-smooth with $\inf f = f^* \in \mathbb{R}$.
 We assume that the original function $f$ is not computable and that the only information available about~$f$ is given by a stochastic oracle producing an estimate $\tilde {f}(x)$ for any $x \in \R^n$. 
In some contexts,  we can assume that the estimate is a random variable parameterized by $x$, that is
$$\tilde {f}(x)=F(x, \xi),$$
with the black-box oracle given by sampling on the $\xi$ space.	 
When dealing with statistical learning problems~\cite{lan2020first}, 
the function $F(x, \xi)$ evaluates the loss of the decision rule parametrized by $x$ on
a data point~$\xi$. In simulation-based engineering applications~\cite{amaran2016simulation}, the function $F(x,\xi)$ is simply related to some noisy computable version of the original function. In this case, $\xi$ represents the random variable that induces the noise, with a classic example given by Monte Carlo simulations. 
When this random variable is exact in expected value, problem \eqref{gen_prob} turns out to be  the expected loss formulation
\begin{equation}\label{prob_zm_fv}
	\min_{x \in \R^n} \mathbb{E}_\xi[F(x,\xi)], 
\end{equation}
a case addressed in recent literature, see, e.g., \cite{larson2016stochastic, shashaani2018astro} for further details.

  \subsection{A short review of stochastic derivative-free optimization} \label{s:shortrev}
	
Although the role of derivative-free
optimization is particularly important when the black box  representing the function is somehow noisy or, in general, of a stochastic type, traditional
DFO methods have been developed primarily for deterministic functions, and only recently adapted to deal with stochastic observations (see, e.g., \cite{chen2018stochastic} for a detailed discussion on this matter). We give here a brief overview of the main results available in the literature by first focusing on \emph{model-based} strategies and then moving to \emph{direct-search} approaches. Further details on these two classes of methods can be found, e.g., in \cite{audet2017derivative,conn2009introduction}. 

In \cite{larson2016stochastic}, the  authors describe a trust-region algorithm to handle noisy objectives and prove convergence when 
$f$ is sufficiently smooth (i.e., with Lipschitz continuous gradient) and the noise is drawn independently from a distribution with zero mean and finite variance, that is they aim at solving a smooth version of problem~\eqref{prob_zm_fv}, when $\xi$ is additive noise. In the same line of research, the authors in \cite{shashaani2018astro} developed a class of derivative-free trust-region algorithms, called ASTRO-DF, for unconstrained optimization problems whose objective function has Lipschitz continuous gradient and  can only be implicitly expressed via a Monte Carlo oracle.
The authors consider again an objective with noise  drawn independently from a distribution with zero mean, finite variance and a bound on the $4v$-th moment (with $v\geq 2$), and prove the almost sure convergence of their method  when  using stochastic polynomial interpolation models. Another relevant reference in this context is given by \cite{chen2018stochastic}, where the authors 
analyze a trust-region model-based algorithm
for solving unconstrained stochastic optimization problems. They consider random models of a smooth objective function, obtained from stochastic observations of the function or its gradient. Convergence rates for this class of methods are reported in~\cite{blanchet2019convergence}.  The frameworks analyzed in~\cite{blanchet2019convergence,cartis2018global,chen2018stochastic} extend the trust-region DFO method based on probabilistic models described in~\cite{bandeira2014convergence}. It is important to notice that the randomness in the models described in~\cite{bandeira2014convergence} comes from the way  sample points are chosen, rather than from noise in the function evaluations.
All the above-mentioned model-based approaches consider functions with a certain degree of smoothness (e.g., with Lipschitz continuous gradient)
and assume that a probabilistically accurate gradient estimate  (e.g., some kind of probabilistically fully-linear
model) can be generated, while of course such an estimate is not available when dealing with non-smooth functions.

A detailed convergence rate analysis of stochastic direct-search variants is reported in~\cite{dzahini2020expected} for the smooth case, i.e., for an objective function with Lipschitz continuous gradient. 
The main theoretical results are obtained by suitably adapting the supermartingale-based framework proposed in~\cite{blanchet2019convergence}. 
A stochastic mesh adaptive direct search for black-box nonsmooth optimization is proposed in~\cite{audet2021stochastic}. The authors prove convergence with probability one to a Clarke stationary point~\cite{clarke1990optimization} of the objective function by assuming that stochastic observations are sufficiently
accurate and satisfy a variance condition. The considered analysis adapts to the direct-search gradient-free framework the theoretical analysis given in  \cite{paquette2020stochastic} for a class of stochastic gradient-based methods. It was extended in \cite{dzahini2022constrained} to the constrained case.

	 \damiano{ In a different line of work, zeroth-order methods, first analyzed in \cite{nesterov2017random} for stochastic objectives, make use of two point estimates to approximate the gradient of a smoothed version of the objective. In \cite{ghadimi2013stochastic} and \cite{nesterov2017random}, complexity bounds are given in the stochastic smooth non-convex setting and the stochastic convex non-smooth setting respectively. In \cite{lin2022gradient}, such bounds are extended to the stochastic non-smooth non-convex setting, measuring convergence with the $(\delta, \varepsilon)$-Goldstein subdifferential. For a survey of zeroth order methods with applications to machine learning problems we refer the reader to \cite{li2022stochastic}. 
		Other approaches recently adapted from the deterministic setting to stochastic derivative-free/zeroth-order  optimization include quasi-Newton methods \cite{menickelly2023stochastic}, the stochastic cubic regularized Newton \cite{roy2022stochastic}, and adaptive regularization methods with cubics \cite{scheinberg2022stochastic}, requiring stochastic estimates of both the objective gradient and also of the objective Hessian in the latter two cases.}
	
	\subsection{The contributions of this manuscript} \label{s:cp}
	
		A main goal of this manuscript is to introduce a tail bound probabilistic condition \damiano{leading to a reduced number of samples per iteration} when dealing with a stochastic black-box function in general direct-search and trust-region schemes. This probabilistic condition focuses on the \textit{reduction estimate}, that is the estimate of the difference between the function at the current iterate and at a potential next iterate, used in the acceptance test of those derivative-free algorithms. It expresses a bound on the probability that the reduction estimate error is greater than a fraction of a stepsize power characterizing the sufficient decrease needed for trial-point acceptance, and can therefore be easily adapted to different choices of the power defining such a sufficient decrease.

Our condition enables us to define a trade-off between noise, algorithm parameters, and number of samples per iteration needed to achieve global convergence, \damiano{which in this context should be intended as convergence to stationary points 
	regardless of the starting point chosen~\cite{lucidi2002global}}.
One of our results is that if all the noise moments are finite, like in the case of Gaussian noise, we only need $O(\Delta_{k}^{-2-\varepsilon})$ samples, where $\Delta_k$ is the stepsize at iteration~$k$, as described in Corollary \ref{cor:samplenum}.
Here, $\varepsilon > 0$ can be made arbitrarily small by selecting the sufficient decrease power arbitrarily close to~$1$. 
  This result compares to the $O(\Delta_k^{-4})$ number of samples required in previous works on stochastic trust-region methods \cite{blanchet2019convergence,  chen2018stochastic,shashaani2018astro} and stochastic direct-search methods \cite{audet2021stochastic, dzahini2020expected, dzahini2022constrained}, \damiano{under a finite variance assumption for the noise}. 	In those works, the sufficient decrease power is taken equal to 2, with the exception of \cite{dzahini2020expected} where the power is considered greater than 1. This article also shows that the number of samples needed can be lowered to $O(\Delta_k^{-\varepsilon})$ when the sampling errors are suitably correlated and the random number generator is known, \damiano{and in particular under a Lipschitz continuity assumption used in the analysis of zeroth-order methods}, as it is proved in Corollary~\ref{cor:correlated}. 
		
		We introduce two different algorithmic schemes, namely a simple stochastic direct-search strategy and a stochastic version of the basic deterministic trust-region scheme reported in~\cite{liuzzi2019trust}.
Both schemes work as follows: they randomly generate a direction (direct search) or a linear term (trust region); then generate the new iterate by either moving along the direction (direct search) or by solving a trust-region subproblem (trust region); finally they use a sufficient decrease acceptance test to decide if the new point can be accepted (successful iteration) or not. In this work, we use stochastic function estimates in the acceptance tests rather than exact values. Our tail bound condition applies to the function reduction estimates of both schemes, and it allows us to deduce global convergence and to take advantage of the improvement in the number of samples per iteration. \damiano{We point out that this is the first time global convergence is proved for a stochastic derivative-free trust-region algorithm for non-smooth unconstrained optimization problems.}
We also remark that the convergence analysis of our trust-region scheme is developed under a new bound on the Hessian of the quadratic model which allows us to generate non-unit linear terms, and thus 
generalizing the deterministic version given in~\cite{liuzzi2019trust}.

Lastly, we show that, for suitable choices of the algorithmic parameters, our tail bound condition is implied by the variance conditions considered in~\cite{audet2021stochastic} and by the probabilistically accurate function estimate assumption used in \cite{audet2021stochastic, chen2018stochastic, paquette2020stochastic}. 
It is also interesting to notice that the finite variance oracle usually considered in the literature (see, e.g., \cite{larson2016stochastic, shashaani2018astro}) can be replaced by a more general finite moment oracle (see Subsection~\ref{s:si} for further details) when constructing estimates satisfying our conditions. 

	\subsection{Outline of the manuscript}
 
	In Section~\ref{condition}, we introduce our tail bound probabilistic condition, prove the new bounds on the number of samples needed per iteration to satisfy the condition, and compare it to existing conditions from the literature. We then analyze the direct-search and trust-region schemes in Sections~\ref{direct} and~\ref{trm}, respectively.  In both cases, the analysis has two main steps. In the first one, we show a result that implies convergence of the stepsize/trust-region radius to zero almost surely.  In the second one, we focus on the random sequence of the unsuccessful iterations and prove, by exploiting the first result, Clarke stationarity at certain limit points. Numerical results comparing our schemes to StoMADS on a standard set of problems are reported in Section~\ref{s:numres}. Finally, we draw some conclusions and discuss some possible extensions in Section~\ref{conclusions}.
In order to improve readability and ease the comprehension, we leave some proofs and additional numerical results to an appendix.
	
	\section{A weak tail bound
		probabilistic condition for function estimation}\label{condition}
	
In order to give convergence results for our algorithms, we need to introduce a tail bound probabilistic condition on the accuracy of the function oracle. 
The stochastic quantities defined hereafter lie in a probability space $(\Pb, \OM, \F)$, with probability measure $\Pb$ and $\sigma$-algebra $\F$ containing subsets of~$\OM$ called events, which is the space of the realizations of the algorithms under analysis. Any single outcome of the sample space $\OM$	 will be denoted by $\omega$.  For a random variable $X$ defined in $\Omega$ and $A \subset \R$ we use the shorthand $\{X \in A\}$ to denote $\{\omega \ | \ X(\omega) \in A\}$. 

 Our algorithms take a step along a certain direction, which can be a direct-search direction or a trust-region step, and in both cases there is a suitable stepsize quantifying the displacement.
   \damiano{The algorithms generate a random process, as described in detail for analogous methods, e.g.,  in \cite[Section 2.2]{audet2021stochastic} and \cite[Section 3]{chen2018stochastic}}. The random quantity realizations of the process are indicated as follows.
  The random direction, the stepsize, and the current point are denoted by $G_k$, $\Delta_k$, and $X_k$, with realizations $g_k$, $\delta_k$, and $x_k$ respectively. The random estimates of $f(X_k)$ and $f(X_k+\Delta_kG_k)$ are denoted by $F_k$ and $F_k^g$, with realizations $f_k$ and $f_k^g$ respectively. In the direct-search case, the acceptance criterion will be defined as
	\begin{equation} \label{eq:ac}
	f_k - f_k^g \geq \theta \delta_k^q \, ,
\end{equation}
for some $\theta > 0$ and $q > 1$, with $\delta_k$ replaced by the norm of the step~$\|s_k\|$ in the trust-region case.
$\F_{k - 1}$ is defined as the $\sigma$-algebra of events up to the choice of~$G_k$, so that in particular, $G_k$ is always measurable with respect to $\F_{k - 1}$, \damiano{which will be considered in the proof of Theorem \ref{t:deltak0}}. \damiano{This $\sigma$-algebra will be used to formalize conditioning on the ``past history'' of the algorithm up to the choice of $G_k$.} More explicitly, $\F_{k - 1}$ is defined as the $\sigma$-algebra generated by $(F_j, F_j^g)_{j = 0}^{k-1}$ and $(G_j)_{j=0}^{k}$. $\mathbb{E}$ is used to denote expectation and conditional expectation, $\hat{v}$ as a shorthand for $v/\n{v}$, with $\hat{v} = 0$ for $v = 0$, a.s.~as a shorthand for {\it almost surely}, and $[1:p]$ to denote the integers in the interval $[1, p]$. \damiano{The starting stepsize $\Delta_0$ is assumed to be deterministic, so that in particular $\me[\Delta_0] < +\infty$, implying that the conditional expectations appearing in the rest of the article are well defined.}
	
	\subsection{The weak tail bound probabilistic condition} \label{s:wtb}
	
	We now introduce our tail bound assumption related to the acceptance criterion~\eqref{eq:ac}.

\begin{assumption} \label{ass:2}
	For some $\eq > 0$ (independent of~$k$):	
	\begin{equation} \label{eq:asp2}
		\Pb\left(|\FMK - \FMG- (f(X_k) - f(X_k + \Delta_k G_k))| \geq \alpha \Delta_{k}^{q} \ | \F_{k - 1} \right) \leq \frac{\eq}{\alpha^{q/(q - 1)}}
	\end{equation}
a.s.~for every $\alpha > 0$.
\end{assumption}
The above assumption is in particular a power law \cite{virkar2012power} tail bound with exponent $q/(q - 1) + 1$.
Notice that an error bound is only assumed for the estimate of the difference $f(X_k) - f(X_k + \Delta_kG_k)$ and not for the estimates of $f(X_k)$ and $f(X_k + \Delta_k G_k)$ taken individually; basically, this bounds  the probability that the error in that estimate is large, as such an estimation plays a crucial role in the acceptance tests of the algorithms of this work.  It will be clear from Sections~\ref{s:cads} and~\ref{s:catr} that the knowledge of an upper bound on $\eq$ is needed in order to ensure convergence in the proposed algorithms. 

\begin{remark} \label{rem:numsamples}
	As described in Section \ref{s:si}, Assumption \ref{ass:2} can be made for any $q$, if the $r$-th moment of the evaluation noise is finite, for $r = q/(q - 1)$. Furthermore, for $q \in (1, 2]$, the number of samples needed to satisfy Assumption \ref{ass:2} is just $O(\Delta_k^{-2q})$ rather than the standard $O(\Delta_k^{-4})$ required under finite variance assumptions \cite{audet2021stochastic} \damiano{with exponent 2 in the sufficient decrease condition \eqref{eq:ac}}. This improvement is possible thanks to the relation between the tail bound \eqref{eq:asp2} and the acceptance criterion \eqref{eq:ac}, together with classic results from probability theory on the convergence rate for the law of large numbers. More precisely, this property will be used: for an average $A$ of $m$ i.i.d.~samples with finite $r$-th finite moment, there is a tail bound of the form $\Pb(A \geq \alpha) \leq K_{m, r}/\alpha^r \, $ with $K_{m, r} \propto m^{-\frac{r}{2}}$,  as a consequence of Rosenthal's inequality \cite{ibragimov2002exact} \damiano{and where $\propto$ stands for "proportional to". Details about these inequalities will be discussed in the appendix.}
\end{remark}

For convergence purposes, a variant of Assumption 2.1 where the real number $\alpha$ is replaced with a $\F_{k-1}$-measurable random variable $\balp$ will be needed. This is justified by the following lemma.

\begin{lemma} \label{lem:condalpha}
	Let $\balp$ be a nonnegative $\F_{k - 1}$ measurable random variable. If \eqref{eq:asp2} holds, then
	\begin{equation} \label{eq:asp2bis}
		\Pb\left(|\FMK - \FMG- (f(X_k) - f(X_k + \Delta_k G_k))| \geq \balp \Delta_{k}^{q} \ | \F_{k - 1} \right) \leq \varepsilon(\balp) := +\infty \mo_{\{0\}} +	\frac{\eq}{\balp^{q/(q - 1)}} \mo_{(0, +\infty)}
		\end{equation}
\end{lemma}   

\begin{proof}
	Let $Y = |\FMK - \FMG- (f(X_k) - f(X_k + \Delta_k G_k))| / \Delta_{k}^{q} $, and $r = \frac{q}{q - 1}$. We prove that in this case that for every $F \in \F_{k - 1}$:
	\begin{equation}\label{eq:intermediate2}
		\me\left[\mo_F \mo_{\{Y \geq \balp\}}\right] \leq \me \left[\mo_F\varepsilon(\balp)\right] \, .
	\end{equation}
	We prove this intermediate result in the case where $\balp$ is a discrete random variable with a countable set of possible realizations $\{a_i\}_{i \in \mathbb{N}}$, and then extend the result to the general case by approximation.
	Indeed we have
	\begin{equation}\label{eq:chainineq}
		\begin{aligned}
			&	\me\left[\mo_F \mo_{\{Y \geq \balp\}}\right] = \sum_{i \in \mathbb{N}} \me\left[\mo_F \mo_{\{Y \geq \balp\}} \mo_{\{\balp = a_i\}}\right] =  \sum_{i \in \mathbb{N}} \me\left[\mo_{F \cap \{\balp = a_i\}} \mo_{\{Y \geq a_i\}}\right] \\ 
			&	\leq \sum_{i \in \mathbb{N}} \me\left[\mo_{F \cap \{\balp = a_i\}}\varepsilon(a_i)\right] = \sum_{i \in \mathbb{N}} \me\left[\mo_{F} \mo_{\{\balp = a_i\}} \varepsilon(\balp)\right] = \me \left[\mo_F \varepsilon(\balp)\right]			
		\end{aligned}
	\end{equation}
	as desired, where we used that $F \cap \{\balp = a_i\}$ is measurable w.r.t. $\F_{k - 1}$ together with \eqref{eq:asp2} for $\alpha = a_i$ in the inequality. Notice that if $a_i = 0$ then by assumption $\varepsilon(a_i) = + \infty$ so the inequality is trivial.
 
	Let now $A$ be a general positive random variable, and $\{A_i\}_{i \in \mathbb{N}}$ be a decreasing sequence of discrete random variables converging to $A$ (e.g., $A_i = \sum_{j = 0}^{+ \infty} \mo_{A\in [j/2^i, (j+ 1)/2^i )} \frac{j+1}{2^i}$). Then $\{\frac{\eq}{A_i^r}\}$ is non decreasing and converges a.s.~to $\varepsilon(\balp)$, so we have all the assumptions needed to apply Beppo Levi's Lemma and get
	\begin{equation} \label{eq:beppolevi}
		\lim_{i \rightarrow \infty} \me\left[\mo_F \frac{\varepsilon_q}{\balp_i^r}\right] = \me\left[\mo_F \varepsilon(\balp)\right] \, .
	\end{equation}
	Therefore
	\begin{equation} \label{eq:fatou}
			 \me\left[\mo_F \mo_{\{Y \geq \balp\}}\right] \; = \; \lim_{i \rightarrow \infty} \me\left[\mo_F \mo_{\{Y \geq \balp_i\}}\right]
			\;  \leq \; \lim_{i \rightarrow \infty} \me [\mo_F \varepsilon(\balp_i)] \; = \; \me\left[\mo_F \varepsilon(\balp)\right] \, ,
	\end{equation}
	where we used the dominated convergence theorem in the first equality, \eqref{eq:chainineq} in the first inequality, and \eqref{eq:beppolevi} in the second equality. We have thus proved \eqref{eq:intermediate2} in the general case. Now, let $Z = \Pb(\mo_{Y\geq A} \ | \ \F_{k - 1}) = \me\left[\mo_{Y\geq A} \ | \ \F_{k - 1}\right]$. We have, for every $F\in \F_{k - 1}$,
	\begin{equation}\label{eq:za}
		\me\left[Z \mo_{F}\right] = \me\left[\mo_{Y\geq A} \mo_{F}\right] \leq \me\left[\varepsilon(\balp) \mo_F\right] \, ,
	\end{equation} 
	where the first equality follows by definition of conditional expectation and the inequality follows by \eqref{eq:intermediate2}. Since both $Z$ and $\frac{\eq}{\balp^r}$ are $\F_{k - 1}$ measurable, from \eqref{eq:za} we get $Z \leq \frac{\eq}{\balp^r}$ a.s.~as desired.  
\end{proof}   

The proof is technical and can be found in the Appendix.

In the remaining of this section, we will report the bounds on the number of samples needed to satisfy Assumption~\ref{ass:2}, as well as a comparison with existing conditions. The proofs are rather technical and can be found in the appendix. 

\subsection{Sampling improvement under the new condition} \label{s:si}

We will show that our tail bound condition can be satisfied under a reduced number of function samples.

We deal first with the case where the error of the oracle has finite $r$-th moment, for some $r > 1$:
\begin{equation} \label{eq:mbound}
	f(x) = \ \me_{\xi}[F(x, \xi)] \, ,   \quad \quad \tx{E}_{\xi}\left[|F(x, \xi) - f(x)|^r\right] \leq  \ M_r < +\infty \, . 
\end{equation}
Recall that finite $r$-th moment implies finite $r'$-th moment for any $r'\in (1, r]$. Thus for $r < 2$ assumption \eqref{eq:mbound} is weaker than assuming finite variance, while for $r > 2$ \eqref{eq:mbound} is stronger than assuming finite variance. The next result describes the number of samples needed asymptotically to satisfy the tail bound conditions as a function of $r$.
\begin{theorem} \label{th:a1}
	Assume that \eqref{eq:mbound} holds with $r = \frac{q}{q - 1}$. If $q > 2$, then Assumption \ref{ass:2} can be satisfied using $O(\Delta_k^{-q^2})$ i.i.d.~samples, while if $q \in (1, 2]$, it can be satisfied using $O(\Delta_k^{-2q})$ i.i.d.~samples.
\end{theorem}

We thus have the following corollary illustrating an improvement on the number of samples per iteration with respect to the finite variance case.

\begin{corollary}\label{cor:samplenum} 
	Let $\varepsilon \in (0, 2]$. Then, for $q = 1 + \varepsilon/2$,  $O(\Delta_k^{-2 -\varepsilon})$ samples are sufficient to satisfy Assumption \ref{ass:2}, under the finite moment assumption \eqref{eq:mbound} for $r = \frac{q}{q - 1}$.
\end{corollary}

In the rest of this section we assume that the objective is given in the form \eqref{prob_zm_fv}, and that the CRN (common number generator) framework can be applied, that is different $x$ can be sampled with fixed $\xi$. Let now $\bar{F}(x, \xi) = F(x, \xi) - f(x)$ be the sampling error. The sampling errors of close points are assumed to be correlated in the following way:
\begin{equation} \label{ass:corr}
	\me_{\xi}\left[|\bar{F}(x, \xi) - \bar{F}(y, \xi)|^r\right] \leq D_r\n{x - y}^r
\end{equation}
for some $D_r > 0$.
\damiano{First, we prove that \eqref{ass:corr} is satisfied if $F(\cdot, \xi)$ is Lipschitz continuous, uniformly in $\xi$. We remark that uniform Lipschitz continuity assumptions analogous to the one made here are standard in the analysis of zeroth-order methods \cite{lin2022gradient,nesterov2017random}. In the finite sum setting, this assumption is equivalent to the Lipschitz continuity of every summand.
	\begin{proposition} \label{p:Lips}
		Assume that $|F(x, \xi) - F(y, \xi)| \leq L_f \n{x - y}$ for every $\xi$, and for every $x, y \in \R^n$. Then \eqref{ass:corr} holds for every $r$, with $D_r = 2^r L_f^r$.
	\end{proposition}
	\begin{proof}
		Notice that from \eqref{prob_zm_fv} and the uniform $L_f$ Lipschitz continuity assumption it follows that $f$ is $L_f$~Lipschitz continuous as well. Hence, we can write
		\begin{equation*}
			\begin{aligned}
				& |\bar{F}(x, \xi) - \bar{F}(y, \xi)| = |F(x, \xi) - F(y, \xi) + (f(y) - f(x))|  \\
				& \leq  |f(x) - f(y)| + |F(x, \xi) - F(y, \xi)| \leq 2 L_f \n{x - y} \, ,
			\end{aligned}
		\end{equation*}
		and conclude    
		\begin{equation*}
			\me_{\xi}\left[|\bar{F}(x, \xi) - \bar{F}(y, \xi)|^r\right] \leq 
			\me_{\xi}\left[2^r L_f^r \n{x - y}^r\right] = 2^r L_f^r \n{x - y}^r \, ,
		\end{equation*}
		as desired.
\end{proof}}

We now present another example where \eqref{ass:corr} is satisfied, with the noise modelled as a Gaussian process, as is common practice in Bayesian optimization (see, e.g., \cite{shahriari2015taking}).

\begin{proposition}\label{p:gaussian}
	Assume that $\{F (x, \xi)\}$ is a Gaussian process with expectation $f(x)$, exponentiated kernel with amplitude $\sigma > 0$ and lengthscale $l > 0$, so that in particular
	\begin{equation} \label{eq:quadratickernel}
		\tx{Cov}_{\xi}(F(x, \xi), F(y, \xi)) = \sigma^2\tx{exp}\left( -\frac{\n{x - y}^2}{2l^2}\right)
	\end{equation}
	for every $x, y \in \R^n$. Then assumption \eqref{ass:corr} is satisfied for every $r \geq 2$ (with $D_r$ depending on $r$).
\end{proposition}

We now show how the bound given in Theorem \ref{th:a1} improves under \eqref{ass:corr}, for $r \geq 2$. 

\begin{theorem} \label{th:corrbound}
	If the random number generator is known and \eqref{ass:corr} holds with $r = \frac{q}{q - 1}$, then Assumption \ref{ass:2} can be satisfied for $q \in (1, 2]$ using $O(\Delta_{k}^{2-2q})$ i.i.d.~samples.
\end{theorem}

As a corollary we can state a further improvement in samples per iteration with respect to Corollary~\ref{cor:samplenum}.

\begin{corollary}\label{cor:correlated}	
	If $q = 1 + \frac{\varepsilon}{2}$ with $\varepsilon \in (0, 2]$ then $O(\Delta_{k}^{-\varepsilon})$ samples are sufficient to satisfy Assumption \ref{ass:2} under~\eqref{ass:corr} for $r = \frac{q}{q - 1}$.
\end{corollary}

	\subsection{Comparison with existing conditions}

In this subsection, we compare our condition with others found in the literature. 
We will start by showing that our condition is weaker than the ones imposed in~\cite{audet2021stochastic}. More precisely, it is implied by \cite[Equation (2)]{audet2021stochastic}, rewritten in our notation as
\begin{equation} \label{eq:c1}
	\begin{aligned}
		\me\left[|\FMG- f(X_k + \Delta_k G_k)|^2 \ | \ \F_{k - 1}\right] & \leq k_f^2 \Delta_k^4 \\
		\me\left[|\FMK - f(X_k)|^2 \ | \ \F_{k - 1}\right] & \leq k_f^2 \Delta_k^4 \, ,
	\end{aligned}
\end{equation}
for a constant $k_f > 0$. 
The $k_f$-variance condition in \eqref{eq:c1} is a gradient-free version of \cite[Assumption 2.4, (iii)]{paquette2020stochastic}, and more precisely can be obtained from the latter by removing the gradient related terms in the right-hand side. It is important to note here that in \cite{paquette2020stochastic} as well as in other works on smooth stochastic derivative free optimization (see, e.g., \cite{chen2018stochastic, larson2016stochastic, shashaani2018astro} and references therein), a probabilistically accurate gradient estimate is also used, while of course such an estimate is not available in a possibly non-smooth setting.

\begin{proposition} \label{p:2eq}
	Condition \eqref{eq:c1} implies Assumption \ref{ass:2} for $\eq = 4k_f^2$ and $q = 2$.
\end{proposition}

The proof of the above result relies on the conditional Chebyshev's inequality (see the proof in the appendix for details).

\begin{remark}
	In the algorithm proposed in 
	\cite{audet2021stochastic} 
	the direct-search direction at iteration $k$ is chosen before the computation of the function estimates used in the acceptance test. Thus our analysis can also be extended to that algorithm.
\end{remark}

We now describe the relation between our assumption and the $\beta$-probabilistic accuracy assumption
\begin{equation} \label{betaprob}
	\Pb\left(\{ |\FMK  - f(X_k) |\leq \tau_f \Delta_{k}^2 \}\cap \{ |\FMG - f(X_k + \Delta_k G_k)| \leq \tau_f \Delta_{k}^2| \} \ | \  \F_{k - 1}\right) \geq \beta \, ,
\end{equation}
used in \cite{audet2021stochastic, chen2018stochastic, paquette2020stochastic} in combination with other assumptions. In particular, conditions \eqref{eq:c1} are used in \cite{audet2021stochastic} and \cite{paquette2020stochastic} (as discussed above), and a probabilistic assumption on the accuracy of random models for the objective is considered  in~\cite{chen2018stochastic}. 

We show that if \eqref{betaprob} is satisfied for every $\beta$ in a certain interval, with $\tau_f$ depending on~$\beta$ and an accuracy parameter~$\varepsilon$, then also our assumption is satisfied with $\eq$ dependent on~$\varepsilon$.

\begin{proposition} \label{p:2eq2}
	Let $\varepsilon > 0$ and $\bar{p} \in (0, 1)$.
	Assume that  \eqref{betaprob} holds for every $\beta \in [1 - \bar{p}, 1)$, with $\tau_f = \tau_f(\beta)  <  \frac{1}{2}\sqrt{\frac{\varepsilon}{1 - \beta}}$. Then Assumption \ref{ass:2} holds with $\eq= \frac{\varepsilon}{\bar{p}}$ and $q=2$.
\end{proposition}

The proposition above follows from the inclusion 
	\begin{equation} \label{eq:pineq}
		\begin{aligned}
			& \{|\FMK - \FMG- (f(X_k) - f(X_k + \Delta_k G_k))| <  \alpha \Delta_k^2 \} \\ 
			&	\supset  \{|\FMK  - f(X_k)| \leq \tau_f(\beta) \Delta_k^2 \} \cap \{|\FMG- f(X_k + \Delta_k G_k) | \leq \tau_f(\beta) \Delta_k^2\}	\, ,		
		\end{aligned}
	\end{equation}
whenever $\tau_f(\beta) < \frac{\alpha}{2}$. A detailed proof is presented in the appendix.

	\section{A simple direct-search method for stochastic non-smooth functions}\label{direct}

	In this section, we first describe a simple stochastic direct-search algorithm for the unconstrained minimization problem given in \eqref{gen_prob}, where~$f$ is possibly non-smooth, and then analyze its convergence. 
	
	\subsection{The stochastic direct-search scheme} \label{s:drscheme}
	
	A detailed description of our stochastic direct-search method is given in Algorithm~\ref{alg:GS}. At each iteration, we generate a direction $g_k$ in the unit sphere (independently of the estimates of the objective function generated so far; see Step 3), and perform a step along the direction~$g_k$ with stepsize $\delta_{k}$. 
Then, at Step~4, we compute~$f_k^g$ and~$f_k$, that is the estimate values of the function at the resulting trial point~$x_k+\delta_k g_k$ and also at~$x_k$. We then accept or reject the trial point based on a sufficient decrease condition, imposing that the improvement on the objective estimate at the trial point is at least~$\theta \delta_k^q$.
If the sufficient decrease condition is satisfied, we have a successful iteration. We hence update our iterate~$x_{k+1}$ by setting it equal to the trial point and expand or keep the same stepsize at Step~5.
Otherwise, the iteration is unsuccessful, so we do not update the current solution, that is, $x_{k+1}=x_k$, but shrink the stepsize (see Step~6). 
	
	{\linespread{1.3}
		\begin{algorithm}[H]
			\caption{\texttt{Stochastic direct search}}
			\label{alg:GS}
			\begin{algorithmic}
				\par\vspace*{0.1cm}
				\item$\,\,\,1$\hspace*{0.1truecm} \textbf{Initialization.} Choose a point $x_0$, $\delta_0, \theta > 0$, $\tau \in (0, 1)$, $\bar{\tau} \in [1, 1 + \tau]$.
				\item$\,\,\,2$\hspace*{0.1truecm} \textbf{For} $k=0, 1\ldots$
				\item$\,\,\,3$\hspace*{0.9truecm} Select a direction $g_k$ in the unit sphere.
				\item$\,\,\,4$\hspace*{0.9truecm} Compute estimates $f_k$ and $f_k^g$ for $f$ at $x_k$ and $x_k + \delta_k g_k$.	
				\item$\,\,\,5$\hspace*{0.9truecm} \textbf{If} $f_k - f_k^g \ge \theta \delta_k^q$, \textbf{Then} set {\tt SUCCESS} $=$ {\tt true}, $x_{k + 1} = x_k + \delta_k g_k$, $\delta_{k + 1} = \bar{\tau}\delta_k$.
				\item$\,\,\,6$\hspace*{0.9truecm} \textbf{Else} set  {\tt SUCCESS} $=$ {\tt false}, $x_{k + 1} = x_k$, $\delta_{k + 1} = (1 - \tau)\delta_k$.
				\item$\,\,\,7$\hspace*{0.9truecm} \textbf{End if}
				\item$\,\,\,8$\hspace*{0.1truecm} \textbf{End for}
				\par\vspace*{0.1cm}
			\end{algorithmic}
		\end{algorithm}
	}
	In order for the method to convergence to Clarke stationary points, 
	 certain subsequences of $\{g_k\}$ must be dense in the unit sphere as described in Theorem \ref{t:cs}. As a remark, a dense sequence in the unit sphere can be generated using a suitable quasirandom sequence \cite{halton1960efficiency, liuzzi2019trust}.
	
	\subsection{Convergence analysis under the tail bound probabilistic condition} \label{s:cads}
	\newcommand{\tqp}{\tau_q^+}
\newcommand{\tqm}{\tau_q^-}
\newcommand{\tqdel}{\bar{\tau}_q}	
	The following theorem, which implies that the stepsize sequence $\{\Delta_k\}$ converges to zero almost surely, is a key result in the convergence analysis. In the proof, Assumption \ref{ass:2} makes it possible to unify the argument for unsuccessful and successful steps. 
 
We define now for convenience the positive constants $\tau_q^+ = (1 + \tau)^q - 1$, $\tau_q^- = 1 - (1 - \tau)^q$, and $\tqdel = \tau_q^+ + \tau_q^-$.
To obtain our result we need the following lower bound on the parameter $\theta$ defining the sufficient decrease condition, dependent on the stepsize update parameter $\tau$ and the tail bound parameter $\eq$:
\begin{equation} \label{as:mu}
	\theta >	\frac{\sqrt[r(q)]{\eq}\tqdel}{\tqm} \, ,
\end{equation}
with $r(q) = \frac{q}{q - 1}$. 
Notice that since $\tau \in (0, 1)$ we must always have $\theta > 0$. The bound~\eqref{as:mu} allows us to relate stepsize expansions to improvements of the objective.

\begin{theorem} \label{t:deltak0}
  	Under Assumption \ref{ass:2}, if Inequality \eqref{as:mu} holds 
	then $\sum_{k \in \mathbb{N}_0} \me\left[\Delta_k^q\right] < \infty$.
\end{theorem}

\begin{proof}	
	Let $\ef = \sqrt[r(q)]{\eq}$, $\Phi_k = f(X_k) - f^* + \eta \Delta_k^q$, with $\eta = \frac{\theta}{\tqdel}$, and $\varepsilon = - \ef + \tqm\theta/\tqdel  > 0$
	where the inequality follows by \eqref{as:mu}.
	
	We will prove, for every $k \geq 0$, that
	\begin{equation} \label{eq:eaud}
		\me\left[\Phi_{k} - \Phi_{k + 1} \ | \ \F_{k - 1}\right] \geq \varepsilon \Delta_k^q \, .
	\end{equation}
	The thesis then follows as in \cite[Theorem 3]{dzahini2020expected}. 	
	
 Let $Z_k$ be the random variable such that $f(X_k) - f(X_k + \Delta_k G_k) = (\theta - Z_k) \Delta_{k}^q$, and let $J_k$ be the event that the step $k$ is successful.  We have 
	\begin{equation} \label{eq:longn}
		\begin{aligned}
			& \me\left[(\Phi_{k} - \Phi_{k + 1}) | \F_{k - 1}\right] = \me\left[(\Phi_{k} - \Phi_{k + 1})(\mo_{J_k} \;\; + (1 - \mo_{J_k})) | \F_{k - 1}\right] \\ 
			&	=  (f(X_k) - f(X_{k + 1}) + \eta(\Delta_{k}^q - \Delta_{k+1}^q))	\me\left[\mo_{J_k} | \F_{k - 1}\right] \\ 
			& \;\;\;\; + (f(X_k) - f(X_{k + 1}) + \eta(\Delta_{k}^q - \Delta_{k+1}^q))	\me\left[\ 1 - \mo_{J_k} | \F_{k - 1}\right] \\
			&	= (f(X_k) - f(X_k + \Delta_k G_k) + \eta(\Delta_{k}^q - \Delta_{k+1}^q)) \me\left[\mo_{J_k} | \F_{k - 1}\right] \\ 
			& \;\;\;\; + \eta(\Delta_{k}^q - \Delta_{k+1}^q)\me\left[ 1 - \mo_{J_k} | \F_{k - 1}\right] \\
			&	\geq  		(((\theta - Z_k) -\eta\tqp) \me\left[\mo_{J_k} | \F_{k - 1}\right] +  \eta \tqm \me\left[ 1 - \mo_{J_k} | \F_{k - 1}\right]) \Delta_{k}^q,
		\end{aligned}
	\end{equation}
	where we used $X_k = X_{k + 1}$ for unsuccessful steps in the second equality, and $\Delta_{k + 1} = \bar{\tau} \Delta_k \leq (1 + \tau) \Delta_k$ for successful steps in the inequality.
	In turn,
	\begin{equation} \label{eq:long2}
		\begin{aligned}
			& (((\theta - Z_k) -\eta\tqp) \me\left[\mo_{J_k} | \F_{k - 1}\right] +  \eta \tqm \me\left[ 1 - \mo_{J_k} | \F_{k - 1}\right]) \Delta_{k}^q \\
			&	= ((\theta -Z_k -\eta\tqdel)\me\left[\mo_{J_k} | \F_{k - 1}\right]  +  \eta\tqm)\Delta_{k}^q\\ 
			&	=  -Z_k \Delta_{k}^q \me\left[\mo_{J_k} | \F_{k - 1}\right] + \eta \tqm \Delta_{k}^q\, ,	
		\end{aligned}	
	\end{equation}
	where we used
	$\me[ 1 - \mo_{J_k} | \F_{k - 1}] = 1 - \me[\mo_{J_k} | \F_{k - 1}]$ in the first equality, and $\theta = \eta \tqdel$ in the second one.
	By combining \eqref{eq:longn} and \eqref{eq:long2} we can therefore conclude
	\begin{equation} \label{eq:long}
		\me\left[(\Phi_{k} - \Phi_{k + 1}) | \F_{k - 1}\right] \geq  -Z_k \Delta_{k}^q \me\left[\mo_{J_k} | \F_{k - 1}\right] + \eta\tqm \Delta_{k}^q \, .
	\end{equation}		 	
	
	Notice that if the step is successful then $f_k - f_k^g \geq \theta \delta_{k}^q$, which implies
	\begin{equation*}
	f_k - f_k^g- (f(x_k) - f(x_k + \delta_k g_k)) \geq  \theta \delta_{k}^q - (\theta - Z_k(\omega)) \delta_{k}^q = Z_k(\omega) \delta_{k}^q \, .
	\end{equation*}
	In particular $J_k \subset \{|\FMK - \FMG- (f(X_k) - f(X_k + \Delta_k G_k))| \geq Z_k \Delta_{k}^q \}$
	and we can write, for $Z_k^+ = Z_k \mo_{Z_k > 0}$, 
	\begin{equation} \label{eq:pj}
		\begin{aligned}
		& \me\left[\mo_{J_k} | \F_{k - 1}\right] = \me\left[\mo_{J_k} \mo_{\{Z_k > 0\}} +\mo_{J_k} \mo_{\{ Z_k \leq 0 \}}   | \F_{k - 1}\right] \\
		& =  \me\left[\mo_{J_k \cap \{Z_k > 0\}} | \F_{k - 1}\right] + \mo_{\{ Z_k \leq 0 \}}\me\left[\mo_{J_k}  | \F_{k - 1}\right]  \\ 
		&	\leq \Pb\left(|\FMK - \FMG- (f(X_k) - f(X_k + \Delta_k G_k))| \geq Z_k^+ \Delta_{k}^q | \F_{k - 1}\right) + \mo_{\{ Z_k \leq 0 \}}\me\left[\mo_{J_k}  | \F_{k - 1}\right] \, ,
		\end{aligned}
	\end{equation}
where we used the measurability of $Z_k$ w.r.t. $\F_{k - 1}$ in the second equality. 
	We now have
		\begin{equation} \label{eq:deltaineq}
	\begin{aligned}
		& -\rho_k \me\left[\mo_{J_k} | \F_{k - 1}\right] \geq -\rho_k^+\me\left[\mo_{J_k} | \F_{k - 1}\right] \\
		& \geq -\rho_k^+(\Pb\left(|\FMK - \FMG- (f(X_k) - f(X_k + \Delta_k G_k))| \geq \rho_k^+ \Delta_{k}^q | \F_{k - 1}\right) + \mo_{\{ \rho_k \leq 0 \}}\me\left[\mo_{J_k}  | \F_{k - 1}\right])  \\ 
		& = -\rho_k^+\Pb\left(|\FMK - \FMG- (f(X_k) - f(X_k + \Delta_k G_k))| \geq \rho_k^+ \Delta_{k}^q | \F_{k - 1}\right) \\
		& \geq -\rho_k^+ \min \left( 1, \varepsilon(\rho_k^+) \right) =  -\rho_k^+ \min \left(1, \varepsilon(\rho_k^+) \right) \geq  -\rho_k^+ \min \left(1, \varepsilon_1(\rho_k^+) \right) \geq -\ef \, ,	
	\end{aligned}
\end{equation}
where we applied \eqref{eq:pj} in the first inequality, the second inequality is a direct consequence of Lemma \ref{lem:condalpha} for $\balp = Z_k^+$, and $\varepsilon_1(t) =  + \infty \mo_{\{0\}} + \varepsilon_q/t \cdot \mo_{(0, +\infty)}$.
	Hence,
	\begin{equation} \label{eq:c2}
		\begin{aligned}
			-Z_k \Delta_{k}^q \me\left[\mo_{J_k} | \F_{k - 1}\right] + \eta\tqm \Delta_{k}^q  
			\geq  (- \ef + \eta\tqm)  \Delta_{k}^q = \varepsilon \Delta_{k}^q	\, ,	
		\end{aligned}
	\end{equation}
	where we used \eqref{eq:deltaineq} in the inequality. 
	
	Claim \eqref{eq:eaud} can finally be obtained by concatenating \eqref{eq:long} and \eqref{eq:c2}.
\end{proof}
	
	The next lemma will be useful for the
	proof of the optimality result of Theorem~\ref{t:cs} which is based on the Clarke generalized directional derivative. We notice that Assumption~\ref{ass:2} plays a key role in this result,
	allowing us to upper bound the error of the reduction estimate by a quantity that depends on the stepsize $\Delta_k$.

	\begin{lemma} \label{l:liminf}
	Let $K$ be the random set of indices of unsuccessful iterations.  Then under Assumption \ref{ass:2} and \eqref{as:mu}, a.s.~in $\OM$
		\begin{equation} \label{eq:liminfl}
			\liminf_{k \in K, \, k \rightarrow \infty} \frac{f(X_k + \Delta_k G_k) - f(X_k)}{\Delta_k} \geq 0 \, .
		\end{equation} 
	\end{lemma}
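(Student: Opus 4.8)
The plan is to reduce \eqref{eq:liminfl} to the almost sure statement that the normalized estimation error tends to zero, and then to establish the latter by a conditional Borel--Cantelli argument driven by Assumption~\ref{ass:2}.

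First I would set up the reduction. Write $e_k = \f - \fg - (f(x_k) - f(x_k + \Delta_k g_k))$ for the error of the reduction estimate. For an unsuccessful index $k \in K$ the test at Step~5 fails, i.e.\ $\f - \fg < \theta \Delta_k^2$, so that $f(x_k) - f(x_k + \Delta_k g_k) = (\f-\fg) - e_k < \theta \Delta_k^2 - e_k$ and therefore
\begin{equation*}
\frac{f(x_k + \Delta_k g_k) - f(x_k)}{\Delta_k} > \frac{-\theta \Delta_k^2 + e_k}{\Delta_k} = -\theta \Delta_k + \frac{e_k}{\Delta_k} \geq -\theta \Delta_k - \frac{|e_k|}{\Delta_k}.
\end{equation*}
Since Theorem~\ref{t:deltak0} (which uses Assumption~\ref{ass:1} and \eqref{as:mu}) gives $\sum_k \Delta_k^2 < \infty$ a.s., in particular $\Delta_k \to 0$ a.s.; hence it suffices to prove that $|e_k|/\Delta_k \to 0$ a.s.\ along the whole sequence, as then the right-hand side above tends to $0$ and, taking $\liminf$ over $k \in K$, \eqref{eq:liminfl} follows.

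The core step is to control $|e_k|/\Delta_k$ via Assumption~\ref{ass:2}. Recall that $\Delta_k$ (and $g_k$) are $\F_{k-1}$-measurable, so for a fixed tolerance $\varepsilon > 0$ I would apply the tail bound with the $\F_{k-1}$-measurable choice $p_k := \min\{1, \eq \Delta_k^2/\varepsilon^2\}$. On $\{\Delta_k \leq \varepsilon/\sqrt{\eq}\}$ one has $p_k = \eq\Delta_k^2/\varepsilon^2$ and hence $\sqrt{\eq/p_k}\,\Delta_k^2 = \varepsilon \Delta_k$, which yields
\begin{equation*}
\Pb\left(|e_k| \geq \varepsilon \Delta_k \mid \F_{k-1}\right) \leq \frac{\eq}{\varepsilon^2}\, \Delta_k^2,
\end{equation*}
the bound being trivial on the complementary (eventually empty) event. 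Summing over $k$ and invoking $\sum_k \Delta_k^2 < \infty$ a.s.\ gives $\sum_k \Pb(|e_k| \geq \varepsilon \Delta_k \mid \F_{k-1}) < \infty$ a.s.; by the conditional (Lévy) Borel--Cantelli lemma the event $\{|e_k| \geq \varepsilon \Delta_k\}$ then occurs for only finitely many $k$, a.s. Intersecting the resulting full-measure sets over $\varepsilon = 1/m$, $m \in \N$, yields $|e_k|/\Delta_k \to 0$ a.s.

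The main obstacle is the rigorous handling of the random threshold, and in particular reconciling it with the fact that Assumption~\ref{ass:2} is stated \emph{without} conditioning. A purely deterministic schedule $p_k \to 0$ cannot work: a summable $p_k$ would have to force $\sqrt{\eq/p_k}\,\Delta_k \to 0$ for every square-summable path $\{\Delta_k\}$, which is impossible because $\sum_k \me[\Delta_k^2] < \infty$ does not entail $\sum_k \sqrt{\me[\Delta_k^2]} < \infty$ (a single summable-in-$L^2$ path may spike along a sparse subsequence). The path-dependent choice $p_k \propto \Delta_k^2$ is what makes the probabilities summable against $\sum_k \Delta_k^2$, but it forces one to (i) evaluate the tail bound at an $\F_{k-1}$-measurable parameter—legitimate by a simple-function approximation exploiting the measurability of $\Delta_k$—and (ii) pass through the \emph{conditional} form of Assumption~\ref{ass:2} and of Borel--Cantelli, since with a random threshold it is the conditional, not the unconditional, probabilities that are controlled. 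This is precisely where the measurability of $\Delta_k$ with respect to $\F_{k-1}$ and the quadratic-in-$\Delta_k$ error bound of Assumption~\ref{ass:2} are essential, while Assumption~\ref{ass:1} enters only indirectly through Theorem~\ref{t:deltak0}.
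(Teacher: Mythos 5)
Your proof is correct and follows the same skeleton as the paper's: the reduction via the failed acceptance test $\f - \fg < \theta \Delta_k^2$, a tail bound at a threshold proportional to $\Delta_k$ (your $p_k = \min\{1, \eq \Delta_k^2/\varepsilon^2\}$ is, up to the truncation at $1$, exactly the paper's choice $p = \eq m^2 \Delta_k^2$ with $\varepsilon = 1/m$), summability of the resulting bounds against $\sum_k \Delta_k^2 < \infty$ from Theorem~\ref{t:deltak0}, a Borel--Cantelli step, and intersection over $\varepsilon = 1/m$. The genuine difference is in how the random parameter is handled. The paper plugs the random $p = \eq m^2 \Delta_k^2$ directly into the unconditional statement \eqref{eq:asp2} and applies the ordinary Borel--Cantelli lemma; read literally this is not well-formed, since \eqref{eq:asp2} is stated for deterministic $p$ only, and the resulting display bounds a deterministic probability by a random variable. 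You instead evaluate the tail bound conditionally on $\F_{k-1}$ at the $\F_{k-1}$-measurable parameter $p_k$ and invoke L\'evy's conditional Borel--Cantelli lemma, which is precisely the machinery that makes a random, path-dependent threshold rigorous; your observation that no deterministic schedule $p_k$ can work is also correct. What your route buys is rigor at the single delicate point of the argument; what it costs is needing Assumption~\ref{ass:2} in conditional form, which is formally stronger than what the paper states (only \eqref{eq:asp} is conditioned on $\F_{k-1}$). That strengthening is harmless in the settings the paper has in mind: the proof of Proposition~\ref{p:2eq} in fact establishes the conditional bound \eqref{eq:c3} first and only then weakens it to \eqref{eq:asp2}, so the conditional form is available whenever the paper's sufficient conditions are used to verify the assumptions. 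In short, this is the same approach as the paper's, with your version supplying the measure-theoretic care that the paper's own write-up glosses over; it would be worth stating explicitly, as a hypothesis, that \eqref{eq:asp2} is assumed to hold conditionally on $\F_{k-1}$.
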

	
	\begin{proof}
	Clearly it suffices to show that, for any given $m \in \mathbb{N}$ and a.s.,
	\begin{equation} \label{eq:liminf}
		\liminf_{k \in K, \, k \rightarrow \infty} \frac{f(X_k + \Delta_k G_k) - f(X_k)}{\Delta_k} \geq -\frac{1}{m} \, .
	\end{equation}
	To start with, by applying Lemma \ref{lem:condalpha} with $\balp = \frac{\Delta_k^{1 - q}}{m}$ we have
	\begin{equation*}
		\Pb\left(|\FMK - \FMG- (f(X_k) - f(X_k + \Delta_k G_k))| \geq \frac{\Delta_{k}}{m} \ | \ \F_{k - 1} \right) \leq  m^{r(q)} \Delta_{k}^q \eq \, ,
	\end{equation*}
	and therefore taking expectations on both sides
	\begin{equation*}
		\Pb\left(|\FMK - \FMG- (f(X_k) - f(X_k + \Delta_k G_k))| \geq \frac{\Delta_{k}}{m} \right) \leq  m^{r(q)} \me\left[\Delta_{k}^q\right] \eq \, .
	\end{equation*}
	
	We can now deduce
	\begin{equation*}
		\sum_{k \in \mathbb{N}_0} \Pb\left(|\FMK - \FMG- (f(X_k) - f(X_k + \Delta_k G_k))| \geq \frac{\Delta_{k}}{m} \right) \leq \sum_{k \in \mathbb{N}_0}  m^{r(q)} \me\left[ \Delta_{k}^q\right] \eq  < \infty \, ,
	\end{equation*}
	where we applied Theorem \ref{t:deltak0} in the last inequality.
	In particular, by the Borel--Cantelli's first lemma
	\begin{equation*} 
		\Pb\left(\left\{|\FMK - f_k^g - (f(X_k) - f(X_k + \Delta_k G_k))| \geq \frac{\Delta_k}{m}\right\}\ \tx{i.o.}\right) = 0 \, ,
	\end{equation*}
	where ``i.o.'' stands for {\em infinitely often.} 
	Hence,
	we have a.s.
	\begin{equation} \label{eq:lenough}
		|F_k - F_k^g - (f(X_k) - f(X_k + \Delta_k G_k))| \leq \frac{\Delta_{k}}{m} \quad \mbox{for $k$ large enough}.
	\end{equation}
	From this we can infer that a.s., for every $k \in K$ large enough
	\begin{equation} \label{eq:limprel}
		\begin{aligned}
			& \frac{f(X_k + \Delta_k G_k) - f(X_k)}{\Delta_k} \geq \frac{\FMG- \FMK - |\FMK - \FMG- (f(X_k) - f(X_k + \Delta_k G_k))|}{\Delta_k}\\
			& \geq - \theta \Delta_k -\frac{1}{m} \,,
		\end{aligned}
	\end{equation}
	where we used \eqref{eq:lenough} combined with the unsuccessful step condition of Algorithm \ref{alg:GS} in the second inequality. Finally, \eqref{eq:liminf} follows passing to the liminf for $k \rightarrow \infty$ in \eqref{eq:limprel}. 
\end{proof}

	We now report the main convergence result for our stochastic direct-search scheme. \damiano{The result requires the existence of accumulation points for the sequence $\{x_k\}$, which can be obtained assuming that the iterates generated by the algorithm lie in a compact set as in \cite[Assumption 1]{audet2021stochastic}.}
	
	\begin{theorem} \label{t:cs}
		Assume that $f$ is Lipschitz continuous with constant $L_f^*$ around any limit point of the sequence of iterates~$\{X_k\}$.
		Let $K$ be the random set of indices of unsuccessful iterations.  Let Assumptions \ref{ass:2} and~\eqref{as:mu} hold. Then, the following property holds a.s.~in $\OM$: if $L \subset K$ is a random set such that the sequence $\{G_k\}_{k \in L}$ is dense in the unit sphere and $\lim_{k \in L, \, k \rightarrow \infty} X_k = X^*$, then the point $X^*$ is Clarke stationary, \damiano{i.e., $f^\circ(X^*, d) \geq 0$ for every $d \in \R^n$.}
	\end{theorem}
\begin{proof}
	\damiano{We refer to $\Ve$ as the event with probability one that \eqref{eq:liminfl} holds, and assume $\omega \in \Ve$ in the rest of the proof, with $L(\omega), K(\omega)$ satisfying the assumption described in the statement}. Let $d$ be a direction in the unit sphere, and let $\damiano{S(\omega) \subset L(\omega)}$ be such that $\lim_{k \in S(\omega), \, k\rightarrow \infty} G_k(\omega) = d \, .$ By definition of Clarke stationarity, since
	\begin{equation*}
		f^\circ(X^*(\omega), d) \geq  \limsup_{k \in S(\omega), \, k \rightarrow \infty} \frac{f(X_k(\omega) + \Delta_k(\omega) d) - f(X_k(\omega))}{\Delta_k(\omega)} \, ,
	\end{equation*}
	we just need to prove that on $\Ve$, and therefore a.s.,
	\begin{equation*}
		\limsup_{k \in S(\omega), \, k \rightarrow \infty} \frac{f(X_k(\omega) + \Delta_k(\omega) d) - f(X_k(\omega))}{\Delta_k(\omega)} \geq 0 \, .
	\end{equation*} 
	
	For  $\omega \in \Ve$ we can write 
	\begin{equation} \label{eq:easy}
		\begin{aligned}
			& \limsup_{k \in S(\omega), \, k \rightarrow \infty} \frac{f(X_k(\omega) + \Delta_k(\omega) G_k(\omega)) - f(X_k(\omega))}{\Delta_k(\omega)} \\ 
			& \geq \liminf_{k \in K(\omega), \, k \rightarrow \infty} \frac{f(X_k(\omega) + \Delta_k(\omega) G_k(\omega)) - f(X_k(\omega))}{\Delta_k(\omega)} \geq 0 \, ,	
		\end{aligned}
	\end{equation}
	where the last inequality follows by \eqref{eq:liminfl}.
	
	Now using the Lipschitz property of $f$ we can write, for $k \in S(\omega)$ large enough,
	\begin{equation*}
		\begin{aligned}
			& \frac{f(X_k(\omega) + \Delta_k(\omega) d) - f(X_k(\omega))}{\Delta_k(\omega)}  \\
			& = \frac{f(X_k(\omega) + \Delta_k(\omega) G_k(\omega)) - f(X_k(\omega))}{\Delta_k(\omega)} +  \frac{f(X_k(\omega) + \Delta_k(\omega) d) - f(X_k(\omega) + \Delta_k(\omega) G_k(\omega))}{\Delta_k(\omega)}  \\
			& \geq \frac{f(X_k(\omega) + \Delta_k(\omega) G_k(\omega)) - f(X_k(\omega))}{\Delta_k(\omega)} - L_f^* \n{G_k(\omega) - d}.
		\end{aligned}
	\end{equation*}
	
	Passing to the limsup for $k \in S(\omega)$ we get
	\begin{equation*}
		\begin{aligned}
			& \limsup_{k \in S(\omega), \, k \rightarrow \infty} \frac{f(X_k(\omega) + \Delta_k(\omega) d) - f(X_k(\omega))}{\Delta_k(\omega)} \\
			& \geq \limsup_{k \in S(\omega), \, k \rightarrow \infty} \frac{f(X_k(\omega) + \Delta_k(\omega) G_k(\omega)) - f(X_k(\omega))}{\Delta_k(\omega)} \geq 0 \, ,
		\end{aligned}
	\end{equation*}
	for every $\omega \in \Ve$, where we used $\n{G_k(\omega) - d} \rightarrow 0$ by construction in the first inequality and~\eqref{eq:easy} in the second.
\end{proof}

	\section{A simple trust-region method for stochastic non-smooth functions}\label{trm}

After having analyzed a simple stochastic direct-search method, we focus on a stochastic version of the Basic DFO-TRNS presented in~\cite{liuzzi2019trust}, and analyze its convergence properties under tail bound probabilistic conditions like the ones used in Section~\ref{direct}. Some minor changes in notation are convenient and will be introduced with a clear reference to the corresponding elements of Algorithm~\ref{alg:GS}.
	
	\subsection{The stochastic trust-region scheme} \label{s:trscheme}

	As already mentioned, the simple trust-region algorithm that we reported here is a  minor modification of  the Basic DFO-TRNS algorithm proposed in \cite{liuzzi2019trust}. Indeed, there are two differences between the Basic DFO-TRNS algorithm and its stochastic counterpart.
	
	The first difference is in the  updating rule related to the trust-region radius. In the modification presented in this work, $\tau \in (0,1)$ is chosen, with $1-\tau$ corresponding contraction factor and $\bar{\tau} \in [1,1+\tau]$ as expansion factor. 
	
	The second, more relevant difference is the fact that the linear term $g_k$ is not constrained to the unit sphere as is the case in DFO-TRNS. This makes more sense when modeling cases where~$g_k$ resembles an approximation of the gradient.
		
	The detailed scheme is reported in Algorithm~\ref{alg:DFO-TRNS}. At every iteration~$k$, a symmetric matrix~$B_k$ is built from interpolation or regression on a sample set of points. The linear term~$g_k$ needs to randomly cover the unit sphere when normalized.
 By using these quantities, a quadratic model of the objective function around $x_k$ is built. The step $s_k$ is obtained by solving the trust-region subproblem, i.e., by minimizing the quadratic model within the spherical trust-region constraint. Once the current step has been computed, the algorithm generates an estimate of the true objective function~$f$ at the trial point $x_k+s_k$ and recomputes a new estimate at~$x_k$, after which the acceptance ratio~$\bar{\rho}_k$ is computed.
	Note that, as in \cite{liuzzi2019trust}, the non-standard acceptance ratio is motivated by convergence requirements.  In this scheme, realizations related to the estimates of the function values $f(x_k)$ at the current iterate and $f(x_k+s_k)$ at the potential next iterate are indicated with $\f$ and $\fs$, thus replacing $f_k^g$ used in the direct-search scheme, as a shorthand for $F_k(\omega)$ and $F_k^s(\omega)$, respectively.
	
	{\linespread{1.3}
		
		\begin{algorithm}   
			\caption{\texttt{Stochastic DFO Trust-Region Algorithm}}
			\label{alg:DFO-TRNS} 
			\begin{algorithmic} 
				\par\vspace*{0.1cm}
				\item$\,\,\,1$ \textbf{Initialization.} Select $x_0\in \mathbb{R}^n$, $\theta > 0$, $\tau\in(0,1)$, $\bar{\tau} \in [1, 1 + \tau]$, $\delta_0>0$, $q > 1$.
				\item$\,\,\,2$ {\bf For} $k=0, 1\ldots$
            
				\item$\,\,\,3$ \quad Select a direction $g_k \neq 0$ and build a symmetric matrix $B_k$.
				\item$\,\,\,4$ \quad Compute
				\vspace*{-0.5cm}
				\begin{equation*}
					s_k  \in  \argmin_{\|s\|\leq \delta_k} g_k^\top s + \frac{1}{2}s^\top B_k s.
				\end{equation*}
				\item$\,\,\,5$ \quad Compute estimates $f_k$, $f_k^s$ for $f$ at $x_k$, $x_k + s_k$, respectively, and let 
				\[
				\bar{\rho}_k = \frac{f_k - f_k^s}{\theta \| s_k \|^{q}}.
				\]
				\item$\,\,\,6$ \quad \textbf{If} $\bar{\rho}_k \geq 1$ \textbf{Then} set {\tt SUCCESS} $=$ {\tt true}, $x_{k+1}= x_k+s_k$,
				 $\delta_{k + 1} = \bar{\tau} \delta_k$.
				\item$\,\,\,7$ \quad \textbf{Else} set {\tt SUCCESS} $=$ {\tt false}, $x_{k+1}= x_k$, $\delta_{k+1}= (1-\tau)\delta_k$.
				\item$\,\,\,8$ \quad \textbf{End If}
				\item$\,\,\,9 $ {\bf End For}
				\par\vspace*{0.1cm}
			\end{algorithmic}
		\end{algorithm}
	}
	
	For convergence purposes, we require the  Hessian model to satisfy the assumption below.

	\begin{assumption} \label{ass:modelHessian}
	There exist $\rho \in (0,1]$ such that, for every $k\in \mathbb{N}_0$, 
		$\n{B_k} \leq	\frac{1}{\rho} \frac{\n{G_k}}{\Delta_k}$.
	\end{assumption}
 
When $\n{G_k} = 1$, the above assumption is essentially saying that $B_k$ can be unbounded as long as it does not go to infinity faster than $1/\Delta_k$, that is a weaker version of \cite[Assumption 2.1]{liuzzi2019trust}.

We now show, under Assumption~\ref{ass:modelHessian}, that the norm $\n{S_k}$ of every trust-region subproblem solution is equal to $\Delta_k$, up to a constant. This will allow us to deduce convergence to~0 of the trust-region radius from convergence to~0 of the solution norm.

\begin{lemma} \label{l:modelH}
	Under Assumption \ref{ass:modelHessian}, $\n{S_k} \geq  \rho \Delta_k$.
\end{lemma}

\begin{proof}
	The thesis is clear if $S_k$ is on the boundary of the trust region, which includes the case $B_k = 0$ since $G_k \neq 0$ by assumption. Otherwise, if $S_k$ is in the interior we must have $B_kS_k = - G_k$, and therefore $ \n{B_k}\n{S_k} \geq \n{G_k} \geq \rho \Delta_k \n{B_k}$,
where we used Assumption \ref{ass:modelHessian} in the second inequality, and the proof is completed.
\end{proof} 

	\subsection{Convergence analysis under the tail bound probabilistic condition} \label{s:catr}
	
In order to analyze the method introduced above, we adapt Assumption \ref{ass:2}, replacing $G_k$ with $\hat{S}_k$,  \damiano{using the $\hat{\cdot}$ notation introduced at the beginning of Section \ref{condition}}, and $\Delta_k$ with $\n{S_k}$.   Now~$\Delta_k$ stands for the trust-region radius.
	Hence, we obtain the following tail bound condition.
 
\begin{assumption} \label{ass:2bis}
For some $\eq > 0$ independent of~$k$:	
\begin{equation*} 
	\Pb\left(|\FMK - \FMG- (f(X_k) - f(X_k + S_k))| \geq \alpha \n{S_k}^{q} \ | \F_{k - 1} \right) \leq \frac{\eq}{\alpha^{q/(q - 1)}}
\end{equation*}
a.s.~for every $\alpha > 0$.
\end{assumption}
	Importantly, if $B_k$ is random the definition of $\F_{k-1}$ must be modified as the $\sigma$-algebra of events up to the generation of $B_k$ and $g_k$. \\ 
	The next theorem implies convergence of the series of trust-region radii elevated to the $q$ almost surely. This obviously implies that the trust-region radius converges to zero almost surely. 
 
	\begin{theorem} \label{t:deltak0bis}
		Under Assumptions \ref{ass:modelHessian} and \ref{ass:2bis}, if
		\begin{equation} \label{as:mubis}
			\theta > \frac{(\rho^q\tqm + \tqp)\sqrt[r(q)]{\eq}}{\rho^q\tqm} \, ,
		\end{equation}
		then $\sum_{k \in \N_0} \me\left[\trr_k^q\right] < \infty$.
	\end{theorem}
 
	\begin{proof}	
			Let $\ef = \sqrt[r(q)]{\eq}$, $\Phi_k = f(X_k) - f^* + \eta \n{S_k}^q$, with $\eta = \frac{\theta \rho^q}{\tqp + \rho^q\tqm}$. Let also $\varepsilon = - \ef + \frac{\rho^q\theta}{\tqp + \rho^q\tqm}  > 0,$  where the inequality follows by \eqref{as:mubis}.
			We will prove, for every $k \geq 0$, that
			\begin{equation} \label{eq:eaud2}
				\me\left[\Phi_{k} - \Phi_{k + 1} \ | \ \F_{k - 1}\right] \geq \varepsilon \n{S_k}^q \, .
			\end{equation}
			Then for the same reasons stated in the proof of Theorem \ref{t:deltak0}, with $\n{S_k}$ instead of $\Delta_k$, we get
			\begin{equation*}
				\sum_{k \in \mathbb{N}_0} \me\left[\n{S_k}^q\right] < + \infty \, ,
			\end{equation*}
			and therefore
			\begin{equation*}
				\sum_{k \in \mathbb{N}_0} \me\left[\Delta_k^q\right] \leq \frac{1}{\rho^q}	\sum_{k \in \mathbb{N}_0}\me\left[\n{S_k}^q\right] <  + \infty \, ,
			\end{equation*}
			where we used Assumption \ref{ass:modelHessian} in the inequality.
			Let $Z_k$ be the random variable such that $f(X_k) - f(X_k + S_k) = (\theta - Z_k)  \n{S_k}^q$, and let $J_k$ be the event that the step $k$ is successful.  We have 
			\begin{equation} \label{eq:longn2}
				\begin{aligned}
					& \me\left[(\Phi_{k} - \Phi_{k + 1}) | \F_{k - 1}\right]  \\
					&	\geq (f(X_k) - f(X_k + S_k) - \eta\tqp\Delta_k^q) \me\left[\mo_{J_k} | \F_{k - 1}\right] \\ 
					& \;\;\;\; + \eta(\Delta_{k}^q - \Delta_{k+1}^q)\me\left[ 1 - \mo_{J_k} | \F_{k - 1}\right] \\
					& \geq (\theta - Z_k)\me\left[\mo_{J_k} | \F_{k - 1}\right]\n{S_k}^q - \eta\tqp\Delta_k^q \me\left[\mo_{J_k} | \F_{k - 1}\right] + \eta \tqm \Delta_{k}^q \me\left[ 1 - \mo_{J_k} | \F_{k - 1}\right]  \\ 
					& \geq (\theta - Z_k)\me\left[\mo_{J_k} | \F_{k - 1}\right]\n{S_k}^q - \eta\frac{\tqp}{\rho^q}\n{S_k}^q \me\left[\mo_{J_k} | \F_{k - 1}\right] + \eta \tqm \n{S_k}^q \me\left[ 1 - \mo_{J_k} | \F_{k - 1}\right]  \\ 
					& = \left((\theta - Z_k)  - \eta\frac{\tqp}{\rho^q} \me\left[\mo_{J_k} | \F_{k - 1}\right] + \eta \tqm \me\left[ 1 - \mo_{J_k} | \F_{k - 1}\right] \right) \n{S_k}^q
				\end{aligned}
			\end{equation}
			where the first inequality follows as in \eqref{eq:longn}, the second inequality by definition of $Z_k$, and the third inequality we use \eqref{ass:modelHessian} on the second summand and $\n{S_k} \leq \Delta_k$ in the third summand. 
			In turn,
			\begin{equation} \label{eq:long22}
				\begin{aligned}
					& \left((\theta - Z_k)  - \eta\frac{\tqp}{\rho^q} \me\left[\mo_{J_k} | \F_{k - 1}\right] + \eta \tqm \me\left[ 1 - \mo_{J_k} | \F_{k - 1}\right] \right) \n{S_k}^q \\
					&	= \left(\theta -Z_k - \eta\left(\frac{\tqp}{\rho^q} + \tqm\right) \me\left[\mo_{J_k} | \F_{k - 1}\right]  +  \eta\tqm \right)\n{S_k}^q\\ 
					&	=  -Z_k \n{S_k}^q \me\left[\mo_{J_k} | \F_{k - 1}\right] + \eta \tqm \n{S_k}^q\, ,	
				\end{aligned}	
			\end{equation}
			where we used
			$\me[ 1 - \mo_{J_k} | \F_{k - 1}] = 1 - \me[\mo_{J_k} | \F_{k - 1}]$ in the first equality, and  $\eta = \frac{\theta \rho^q}{\tqp + \rho^q\tqm}$ in the second one.
			By combining \eqref{eq:longn2} and \eqref{eq:long22} we therefore get
			\begin{equation*}
				\me\left[(\Phi_{k} - \Phi_{k + 1}) | \F_{k - 1}\right] \geq  -Z_k\n{S_k}^q\me\left[\mo_{J_k} | \F_{k - 1}\right] + \eta\tqm \Delta_{k}^q \, .
			\end{equation*}		 				
			The conclusion now follows as in the proof of Theorem \ref{t:deltak0}, replacing $\Delta_k$ with $\n{S_k}$.
 	\end{proof}

As for the analysis of our direct-search scheme in Section \ref{direct}, we now state a lemma that  will be useful for the
proof of the optimality result based on the Clarke generalized derivative.

	\begin{lemma} \label{l:liminfbis} 
		Let $K$ be the random set of indices of unsuccessful iterations.  Then under Assumptions \ref{ass:modelHessian}, \ref{ass:2bis}, and \eqref{as:mubis}, a.s.
		\begin{equation*}
			\liminf_{k \in K, \, k \rightarrow \infty} \frac{f(X_k + S_k) - f(X_k)}{\|S_k\|} \geq 0 \, .
		\end{equation*} 
	\end{lemma}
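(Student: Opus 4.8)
The plan is to mirror the proof of Lemma~\ref{l:liminf}, carrying out the substitutions $g_k \mapsto s_k/\n{s_k}$ and $\Delta_k \mapsto \n{s_k}$ throughout, and invoking the trust-region tail bounds \eqref{eq:atr}--\eqref{eq:atr2} together with Theorem~\ref{t:deltak0bis} in place of Theorem~\ref{t:deltak0}. First I would fix an arbitrary $m \in \N$ and reduce the claim to showing that, a.s., $\liminf_{k \in K,\, k \to \infty} (f(x_k + s_k) - f(x_k))/\n{s_k} \geq -1/m$, since letting $m \to \infty$ then yields the thesis. Applying \eqref{eq:atr2} with the choice $p = \eq m^2 \n{s_k}^2$ (so that $\sqrt{\eq/p}\,\n{s_k}^2 = \n{s_k}/m$) gives the pointwise tail estimate $\Pb(|\f - \fs - (f(x_k) - f(x_k + s_k))| \geq \n{s_k}/m) \leq \eq m^2 \n{s_k}^2$, valid for every $k$ (and trivially so whenever the right-hand side exceeds $1$).

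Next I would sum these probabilities. By Theorem~\ref{t:deltak0bis} there is a full-measure event on which $\sum_k \trr_k^2 < \infty$; since the step is always contained in the trust region we have $\n{s_k} \leq \trr_k$, hence $\sum_k \n{s_k}^2 \leq \sum_k \trr_k^2 < \infty$ on that event, and therefore $\sum_k \eq m^2 \n{s_k}^2 < \infty$. The Borel--Cantelli lemma then guarantees that, a.s., the event $\{|\f - \fs - (f(x_k) - f(x_k + s_k))| \geq \n{s_k}/m\}$ occurs only finitely often, so that a.s. the reduction-estimate error is bounded by $\n{s_k}/m$ for all $k$ large enough.

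Finally, for an unsuccessful index $k \in K$ the acceptance test fails, i.e.\ $\f - \fs < \theta \n{s_k}^2$; writing $f(x_k + s_k) - f(x_k) = (\fs - \f) + (\f - \fs - (f(x_k) - f(x_k+s_k)))$ and bounding the last term by the Borel--Cantelli estimate yields, for $k \in K$ large enough, $(f(x_k + s_k) - f(x_k))/\n{s_k} \geq -\theta \n{s_k} - 1/m$. Passing to the liminf and using $\n{s_k} \leq \trr_k \to 0$ (again from Theorem~\ref{t:deltak0bis}) gives the bound by $-1/m$, completing the reduction. I expect no genuinely hard step here, since the argument is a direct transcription of Lemma~\ref{l:liminf}; the only points deserving care are that the summable series controlling Borel--Cantelli is the \emph{radius} series $\sum_k \trr_k^2$, which must be transferred to the step series through $\n{s_k} \leq \trr_k$, and that the choice $p = \eq m^2 \n{s_k}^2$ may exceed $1$ for small $k$, in which case the tail inequality holds trivially and does not affect the summability used in the Borel--Cantelli step.
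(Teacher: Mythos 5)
Your proposal is correct and follows essentially the same route as the paper's own proof: the same reduction to a $-1/m$ bound, the same application of \eqref{eq:atr2} with $p = \eq m^2 \n{s_k}^2$, the transfer of summability from $\sum_k \trr_k^2$ (Theorem~\ref{t:deltak0bis}) to $\sum_k \n{s_k}^2$ via $\n{s_k} \leq \trr_k$, Borel--Cantelli, and the unsuccessful-step condition. Your explicit remark that the tail inequality holds trivially when $\eq m^2 \n{s_k}^2 > 1$ is a small point of care that the paper leaves implicit, but it does not change the argument.
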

	
	\begin{proof}
	Follows analogously to Lemma \ref{l:liminf}.
	\end{proof}

	We now state a convergence result extending Theorem \ref{t:cs} to our trust-region method.

	\begin{theorem} \label{t:csbis}
	Assume that $f$ is Lipschitz continuous with constant $L_f^*$ around any accumulation point of the sequence of iterates~$\{X_k\}$.
		Let $K$ be the random set of indices of unsuccessful iterations.  Let Assumptions \ref{ass:modelHessian}, \ref{ass:2bis}, and \eqref{as:mubis} hold. Then, the following property holds a.s.~in $\OM$: if $L \subset K$ is a random set such that $\{\hat{S}_k\}_{k \in L}$ is dense in the unit sphere and $\lim_{k \in L, \, k \rightarrow \infty} X_k = X^*$, then the point $X^*$ is Clarke stationary, \damiano{i.e., $f^\circ(X^*, d) \geq 0$ for every $d \in \R^n$.}
	\end{theorem}
	
	\begin{proof}
		The proof follows the lines of Theorem~\ref{t:cs}'s proof, replacing $\Delta_k$ and $G_k$ by $\|S_k\|$
		and $\hat{S}_k$, respectively.
	\end{proof}	
 
We now introduce a stronger version of Assumption~\ref{ass:modelHessian}, and show that under this stronger assumption the trust-region scheme becomes at the limit a search along a direction $G_k$ with stepsize~$\Delta_k$. 

\begin{assumption}\label{ass:modelHessian2}
For some positive sequence \damiano{of uniformly bounded random variables} $\{A_k\}$ such that $A_k \rightarrow 0$ a.s., it holds a.s.~$\n{B_k} \leq A_k \n{G_k}/\Delta_k$.
\end{assumption}
Trivially, Assumption \ref{ass:modelHessian2} implies Assumption \ref{ass:modelHessian}, with $\rho = \frac{1}{\max(\{\n{A_k}_{\infty}\})} $.

	\begin{proposition}\label{property:Cauchy}
		Let Assumptions \ref{ass:2bis}, \ref{ass:modelHessian2}, and \eqref{as:mubis} hold. Then a.s.~$\lim_{k \rightarrow \infty} \hat{G}_k + \hat{S}_k = 0$.
	\end{proposition}
 
\begin{proof}
First, notice that $\n{\hat{G}_k} = 1$, as well as $\n{\hat{S}_k} = 1$ since $G_k$ must be always different from~$0$ and therefore $S_k$ as well. Now define $F_k^m$ as the local model $F_k^m(s) = G_k^\top s + \frac{1}{2}s^\top B_k s$, and let $\Gamma_k = \hat{G}_k^\top \hat{S}_k$ be the cosine of the angle between $\hat{G}_k$ and $\hat{S}_k$. We need to prove $\Gamma_k \rightarrow -1$ almost surely.

We have on the one hand
\begin{equation} \label{eq:Fkm}
	\begin{aligned}
			& F_k^m(S_k) = S_k^\top G_k + \frac{1}{2} S_k^\top B_k S_k = \Gamma_k\n{S_k}\n{G_k}  + \frac{1}{2} S_k^\top B_k S_k \\ 
			& \geq \min(0, \Gamma_k) \Delta_{k} \n{G_k} - \frac{1}{2} \n{B_k} \Delta_k^2 \, ,
	\end{aligned}
\end{equation}
where we used $\n{S_k} \leq \Delta_k$ in the inequality. On the other hand
\begin{equation} \label{eq:Fkm2}
	F_k^m(-\Delta_{k}\hat{G}_k) = -\Delta_k\n{G_k} + \frac{\Delta_{k}^2}{2}\hat{G}_k^\top B_k \hat{G}_k \leq -\Delta_k\n{G_k} + \frac{1}{2}\Delta_{k}^2\n{B_k} \, .
\end{equation}
Putting \eqref{eq:Fkm} and \eqref{eq:Fkm2} together we obtain
\begin{equation} \label{eq:intermediate}
	-\Delta_k\n{G_k} + \frac{1}{2}\Delta_{k}^2\n{B_k} \geq F_k^m(-\Delta_{k}\hat{G}_k) \geq F_k^m(S_k) \geq \min(0, \Gamma_k) \Delta_{k} \n{G_k} - \frac{1}{2} \n{B_k} \Delta_k^2 \, ,
\end{equation}
where in the second inequality we used that $S_k$ is a solution of the trust-region subproblem. Then rearranging \eqref{eq:intermediate} and dividing by $\Delta_k\n{G_k}$ we get
\begin{equation} \label{eq:final}
	(1 + \min(0, \gamma_{k})) \leq  \frac{\n{B_k}\Delta_{k}}{\n{G_k}} \, .
\end{equation}
Since the right-hand side of \eqref{eq:final} converges to $0$ a.s.~\damiano{thanks to Assumption \ref{ass:modelHessian2}}, we get $1 + \min(0, \gamma_{k}) \rightarrow 0$ a.s., and we can conclude $\Gamma_k\rightarrow -1$ a.s.~as desired.
\end{proof}
Under the conditions of Proposition \ref{property:Cauchy}, we just need to ensure that $\hat{G}_k$ is dense in the unit sphere on subsequences to obtain convergence to Clarke stationary points, as expressed in the following corollary.

\begin{corollary}\label{cor:convtr_classic}
	Assume that $f$ is Lipschitz continuous with constant $L_f^*$ around any accumulation point of the sequence of iterates~$\{X_k\}$.
Let $K$ be the random set of indices of unsuccessful iterations.  Let Assumptions \ref{ass:2bis}, \ref{ass:modelHessian2} and \eqref{as:mubis}  hold. Then, the following property holds a.s.~in $\OM$: if $L \subset K$ is a random set such that the sequence $\{\hat{G}_k\}_{k \in L}$ is dense in the unit sphere and $\lim_{k \in L, \, k \rightarrow \infty} X_k = X^*$ then the point $X^*$ is Clarke stationary, \damiano{i.e., $f^\circ(X^*, d) \geq 0$ for all $d \in \mathbb{R}^n$.}
	\end{corollary}	
 
	\begin{proof}
		Thanks to Proposition \ref{property:Cauchy}, for almost every $\omega$ in $\Omega$, if the sequence $\{\hat{G}_k(\omega)\}_{k \in L}$  is dense in the unit sphere $\{\hat{S}_k(\omega)\}_{k \in L}$ also is, and we can therefore apply Theorem \ref{t:csbis}.
	\end{proof}
 
\section{Numerical results}\label{s:numres}
We report here some numerical results, first comparing the performance of Algorithm \ref{alg:GS} for different choices of $q$, and then comparing Algorithms \ref{alg:GS} and \ref{alg:DFO-TRNS} to StoMADS from \cite{audet2021stochastic}. \\
   To compare the performance of the algorithms, we use data and performance profiles as defined in \cite{more2009benchmarking}. Their definitions are briefly recalled here.	Given a set~$S$ of algorithms and a set~$P$ of problems, for $s\in S$ and $p \in P$, let $t_{p,s}$ be the number of function evaluations required by algorithm $s$ on problem~$p$ to satisfy the condition 
\begin{equation}\label{eq:stop}
	f(x_k) \leq f_L + \gamma_p(f(x_0) - f_L)\, , 
\end{equation}
where $\gamma_p > 0$ and $f_L$ is the best objective function value achieved by any solver on problem~$p$. Then, the performance and data profiles of solver $s$ are generated using
\begin{eqnarray*}
	\rho_s(\alpha) & = & \frac{1}{|P|}\left|\left\{p\in P: \frac{t_{p,s}}{\min\{t_{p,s'}:s'\in S\}}\leq\alpha\right\}\right|,\\
	d_s(\kappa) & = & \frac{1}{|P|}\left|\left\{p\in P: t_{p,s}\leq\kappa(n_p+1)\right\}\right|\, ,
\end{eqnarray*}
where $n_p$ is the dimension of problem $p$.
A budget of $10000(n_p+1)$ sample evaluations for both algorithms is used, and two different tolerances for \eqref{eq:stop}, that is $\gamma_p \in \{10^{-2},10^{-4}\}$. All the profiles are built with the true function values, while applying
the algorithms to the noisy functions. The set $P$ includes 96 well known instances of derivative-free unconstrained nonsmooth optimization problems. \damiano{The full problem list, with dimensions and references, is reported in an appendix (see Table~\ref{tab:1} in Section \ref{sec:bench})}. Each of the instances is used 10 times, so that the algorithms perform 10 runs on every instance, thus getting $|P| = 960$. 

	\subsection{Algorithm \ref{alg:GS} for different choices of $q$} \label{s:numresAP}
In this section, we compare two basic instances of Algorithm \ref{alg:GS}, obtained choosing uniformly at random the search direction in the unit sphere, for different choices of the sufficient decrease parameter and sampling strategies, corresponding to different values of $q$ and $r$ in the algorithmic scheme and in the assumptions. The main goal is to provide further evidence that choosing~$q$ smaller than~$2$ and using fewer samples per iteration as suggested by the theory can improve numerical performance. In particular, we will show that the claim remains true also in the case of correlated errors discussed in Section~\ref{s:si}. 
\begin{remark}
	It is of course not always the case that an improvement in number of samples per iteration leads to an improvement in the solution found with a fixed budget of samples, since using lower values of~$q$ might increase the iteration complexity. For instance, for smooth objectives with deterministic oracles, a complexity of $O(\epsilon^{-\frac{q}{q - 1}})$ was proved in \cite{vicente2013worst} for a scheme analogous to Algorithm \ref{alg:GS}, with  $q \in (1, 2]$. Then in this case, the lower number of samples per iteration for $q$ approaching 1 comes at the price of a potentially much higher iteration complexity. However, it is important to note that the complexity bounds from~\cite{vicente2013worst} heavily rely on the Lipschitz continuity of the gradient, so that this trade-off does not necessarily generalize to potentially non-smooth objectives.
\end{remark}

The basic version of Algorithm \ref{alg:GS} used here is referred to as SDS$q$ for $q \in \{2, 1.5\}$.  We are therefore comparing a standard choice \cite{audet2021stochastic,dzahini2020expected,dzahini2022constrained} to one allowing the use of a lower number of samples per iteration as proved in Theorems~\ref{th:a1} and~\ref{th:corrbound}. The noise on the objective was assumed to be $0$ in expectation and normally distributed with standard deviation $0.1$.  By Theorem~\ref{th:a1}, $O(\Delta_k^{-2q})$ samples are needed to satisfy the weak tail bound assumptions. Given that the Gaussian noise has finite $r$-th moment for every $r$, Theorem~\ref{th:a1} can be applied with $r = q/(q - 1)$. The number of samples needed per iteration is then $O(\Delta_{k}^{-4})$ and $O(\Delta_{k}^{-3})$ respectively for $q = 2$ and $q = 1.5$. Thus, we simulated the resulting noise after averaging $p_k$ independent samples  by adding to the objective $N(0, 1/ \sqrt{p_k})$ distributed random variables. The remaining parameters were tuned with a basic grid search to obtain good performances for both instances of Algorithm \ref{alg:GS} to $\tau = 0.001$, $\bar{\tau} = 1.001$, $\theta = 0.5$ and $\delta_0 = 2$.   

\begin{remark}
	It is not difficult to check that the bound \eqref{as:mu} translates to $\theta > c$ with $c = 4$ and $c \approx 9$ for $q = 2$ and $q = 1.5$ respectively. However, both algorithms show bad performance for $\theta$ greater than 1. The authors conjecture here that lower values of $\theta$ and therefore a more tolerant acceptance test might still lead to convergence in practice in most cases, with a lower number of samples needed to find a good solution due to the resulting more aggressive exploration. Finding weaker versions of \eqref{as:mu} that still guarantee convergence under reasonable assumptions remains of course an open problem to be studied more in depth in future works.   
\end{remark}

Data and performance profile in the general case of finite $r$-th moment and correlated errors are reported in Figures \ref{fig:overallAP} and \ref{fig:overall2AP} respectively. In the case of finite $r$-th moment, the number of samples was set equal to $p_k = \lceil 0.01 \delta_{k}^{-4} \rceil$ and $p_k = \lceil 0.01 \delta_{k}^{-3} \rceil$ for $q=2$ and $q = 1.5$ respectively, consistently with the bound proved in Theorem \ref{th:a1}. In the correlated error case, at the iteration $k$ we add noise with standard deviation $0.1\delta_k$ in the estimate of the difference and set the number of samples  to $\lceil 0.01 \delta_k^{-2} \rceil$ and $\lceil 0.01 \delta_k^{-1} \rceil$ for $q = 2$ and $q = 1.5$ respectively, consistently with the bound proved in Theorem \ref{th:corrbound}.  For both cases it can clearly be seen how choosing $q = 1.5$ rather than $q = 2$ leads to a better performance.

\begin{figure}[h]
	\centering
	\begin{subfigure}[b]{0.34\textwidth}
		\includegraphics[width=\linewidth]{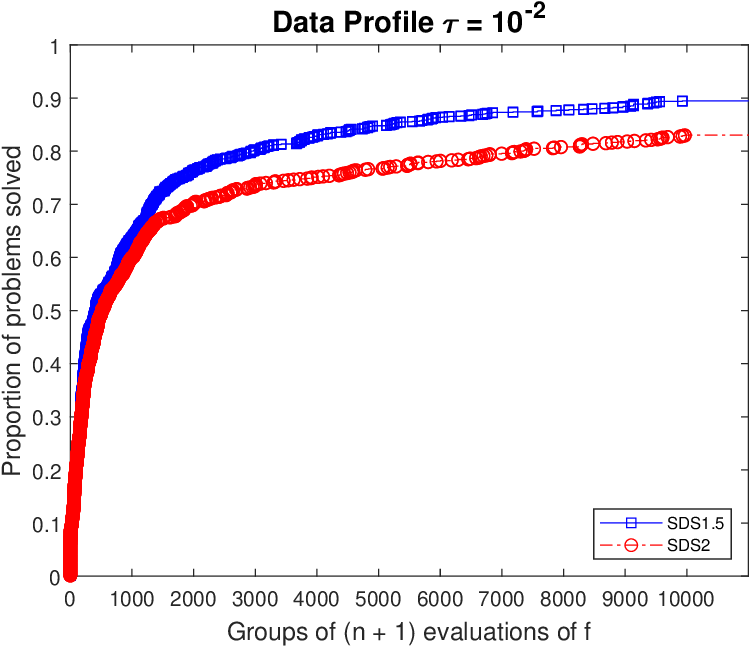}
	\end{subfigure}	
	\begin{subfigure}[b]{0.34\textwidth}
		\includegraphics[width=\linewidth]{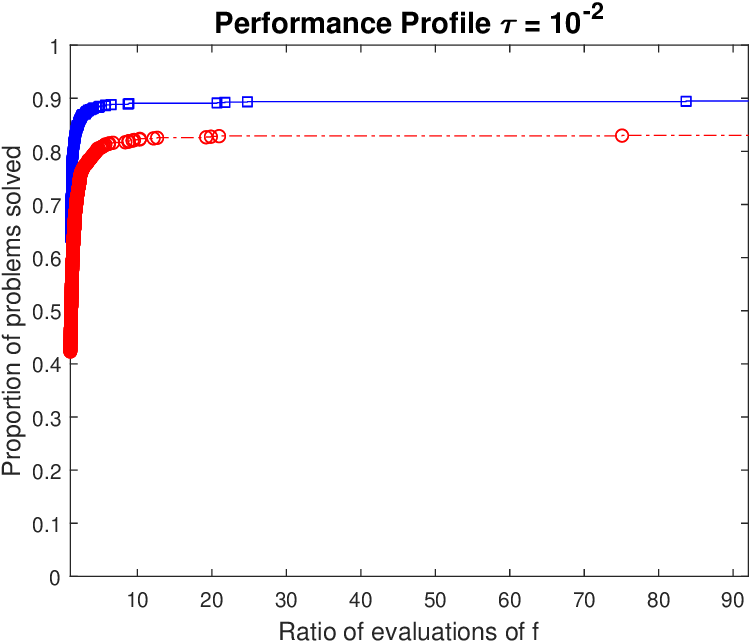}
	\end{subfigure}

	\begin{subfigure}[b]{0.34\textwidth}
		\includegraphics[width=\linewidth]{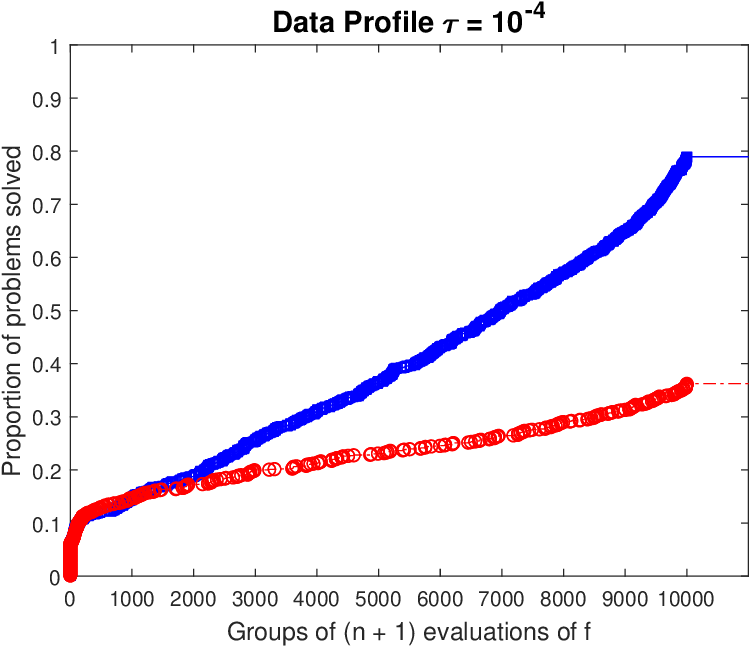}
	\end{subfigure}
	\begin{subfigure}[b]{0.34\textwidth}
		\includegraphics[width=\linewidth]{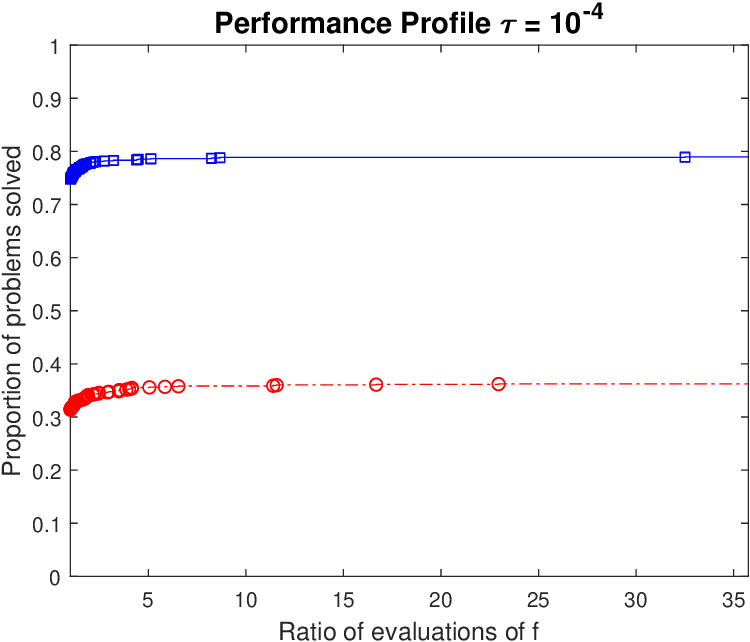}
	\end{subfigure}
	\caption{Data and performance profiles for Algorithm \ref{alg:GS} with $q \in \{2, 1.5\}$ and in the finite $r-$th moment setting, corresponding to SDS2 and SDS1.5, on the set of problems reported in Table \ref{tab:1}.}
	\label{fig:overallAP}
\end{figure}

\begin{figure}[h]
	\centering
	\begin{subfigure}[b]{0.34\textwidth}
		\includegraphics[width=\linewidth]{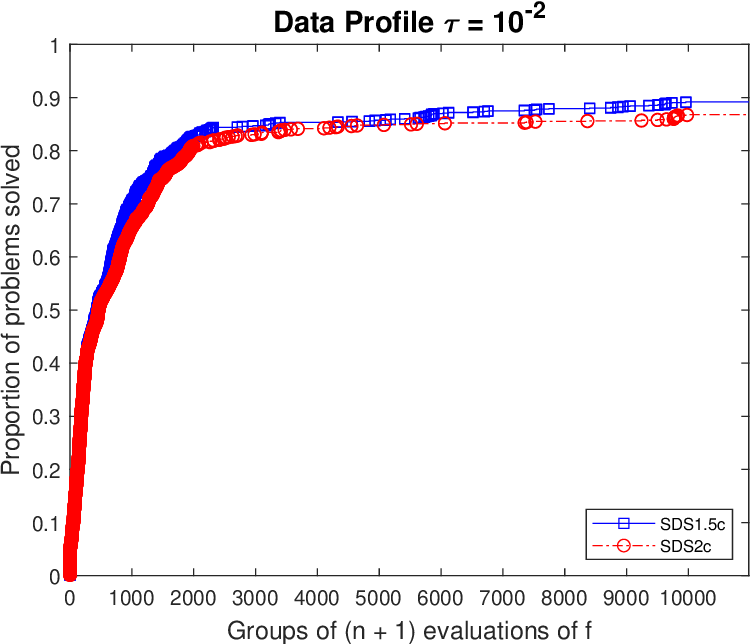}
	\end{subfigure}	
	\begin{subfigure}[b]{0.34\textwidth}
		\includegraphics[width=\linewidth]{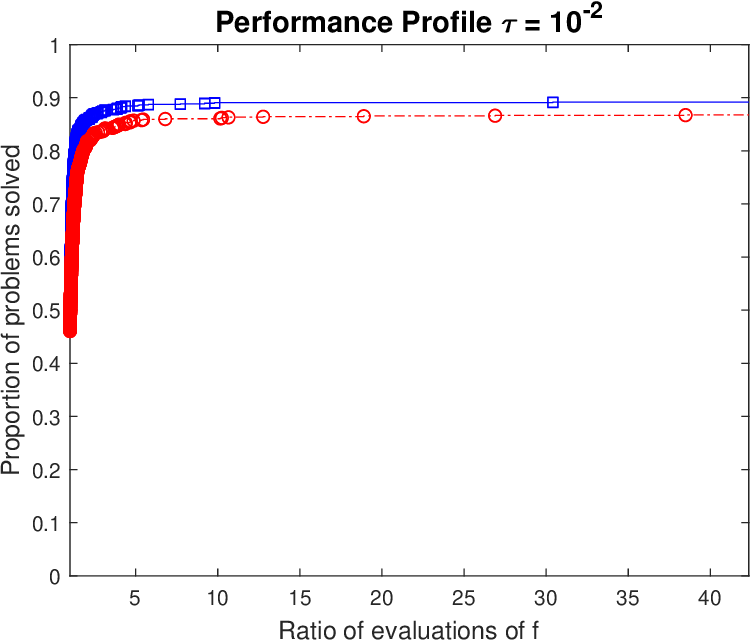}
	\end{subfigure}
	
	\begin{subfigure}[b]{0.34\textwidth}
		\includegraphics[width=\linewidth]{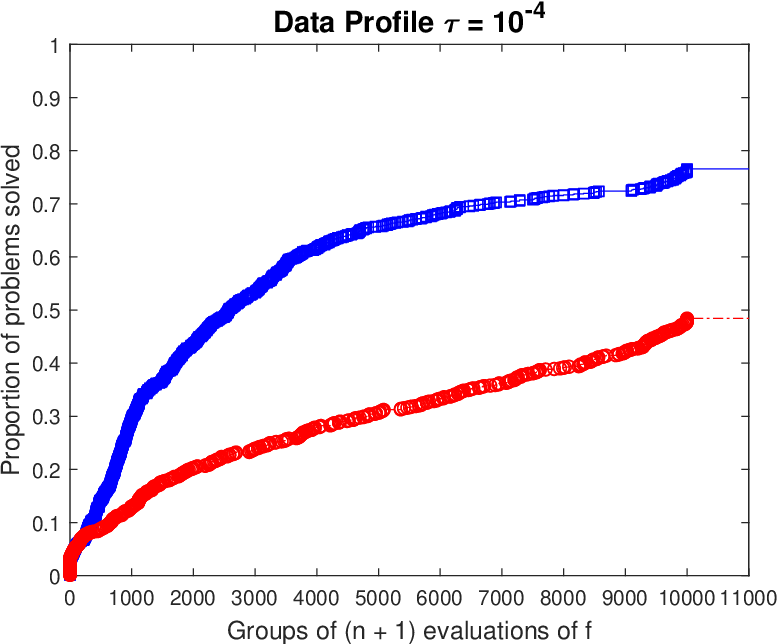}
	\end{subfigure}
	\begin{subfigure}[b]{0.34\textwidth}
		\includegraphics[width=\linewidth]{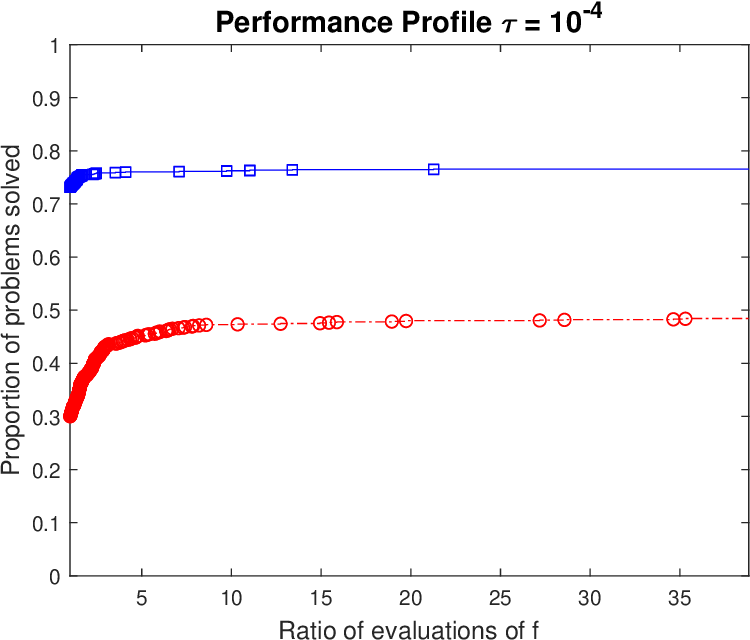}
	\end{subfigure}
	\caption{Data and performance profiles for Algorithm \ref{alg:GS} with $q \in \{2, 1.5\}$ and in the correlated error setting, corresponding to SDS2c and SDS1.5c, on the set of problems reported in Table~\ref{tab:1}.}
	\label{fig:overall2AP}
\end{figure}

\subsection{Comparison of Algorithms \ref{alg:GS} and \ref{alg:DFO-TRNS} with STOMADS}

\damiano{We describe in this section numerical results comparing a modified version of Algorithm~\ref{alg:GS}, Algorithm \ref{alg:DFO-TRNS}, and the StoMADS algorithm from \cite{audet2021stochastic}. The version of Algorithm~\ref{alg:GS} considered here is obtained alternating coordinate search directions with random directions after the stepsize falls below a certain threshold~$\bar{\Delta}$, like it was done in the deterministic case, e.g., in \cite{fasano2014linesearch,kungurtsev2022retraction}. The convergence result of Theorem~\ref{t:cs} extends to this variant in a straightforward way. In the tests, we used the threshold $\bar{\Delta} = 0.5$.  As for Algorithm \ref{alg:DFO-TRNS}, we adopt at all iterations the approach described in \cite[Section 5]{bandeira2012computation} to build the model at iteration~$0$, i.e., we build a minimum Frobenius norm model using the sample set $\{x_k\} \cup \{x_k \pm \delta_k e_i \}_{i \in [1:n]}$ at all iterations~$k$. Unlike in \cite[Algorithm 5.1]{bandeira2012computation} we do not add or subtract any point to the sample set, and rebuild from scratch the model at every iteration.}
  
\damiano{For both Algorithm \ref{alg:GS} and Algorithm \ref{alg:DFO-TRNS}, two choices of the parameter $q$ were tested, that is $q = 2$ and $q = 1.5$. The two instances of the modified version of Algorithm \ref{alg:GS} are referred to as SDS+$q$ for $q\in\{2,1.5\}$, and analogously the two instances of Algorithm \ref{alg:DFO-TRNS} are referred to as STR$q$ for $q \in \{2, 1.5\}$.  In accordance with this bound and the one in our Theorem \ref{th:a1}, the number of samples was set equal to $p_k = \lceil 0.01 \delta_{k}^{-4} \rceil$ and $p_k = \lceil 0.01 \delta_{k}^{-3} \rceil$ for $q=2$ and $q = 1.5$ respectively, like in the previous section. In the case of StoMADS, when using the default choice $q = 2$ for the frame size exponent in the acceptance criterion, the theory in \cite{audet2021stochastic} suggests the use of $O(\Delta_{k}^{-4})$ samples per iteration.}
      
   \damiano{By taking a look at the profiles, it can be easily seen that SDS+1.5 and STR1.5 outperform the other methods for $\gamma_p = 10^{-2}$ and $\gamma_p = 10^{-4}$ respectively. This suggests that the algorithms analyzed in this work can outperform StoMADS, and that using $q = 1.5$ with fewer samples per iteration can give better performances both for Algorithm \ref{alg:GS} and \ref{alg:DFO-TRNS}. The trust-region method also appear to show better performance when considering lower accuracy parameters. The trus-region approach seems to work better than the direct-search one, which is not surprising even considering fixed geometry model building.}

  \begin{figure}[h]
 	\centering
 	\begin{subfigure}[b]{0.34\textwidth}
 		\includegraphics[width=\linewidth]{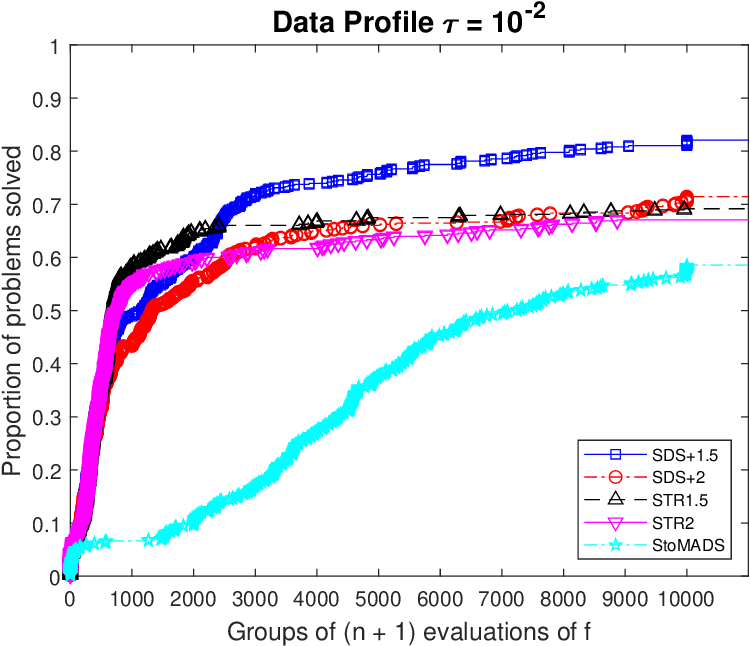}
 	\end{subfigure}	
 	\begin{subfigure}[b]{0.34\textwidth}
 		\includegraphics[width=\linewidth]{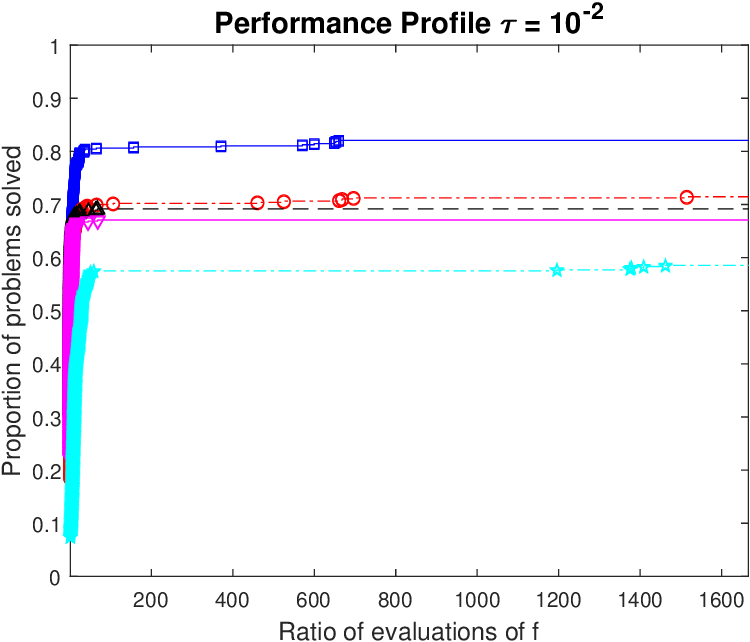}
 	\end{subfigure}
 	
 	\begin{subfigure}[b]{0.34\textwidth}
 		\includegraphics[width=\linewidth]{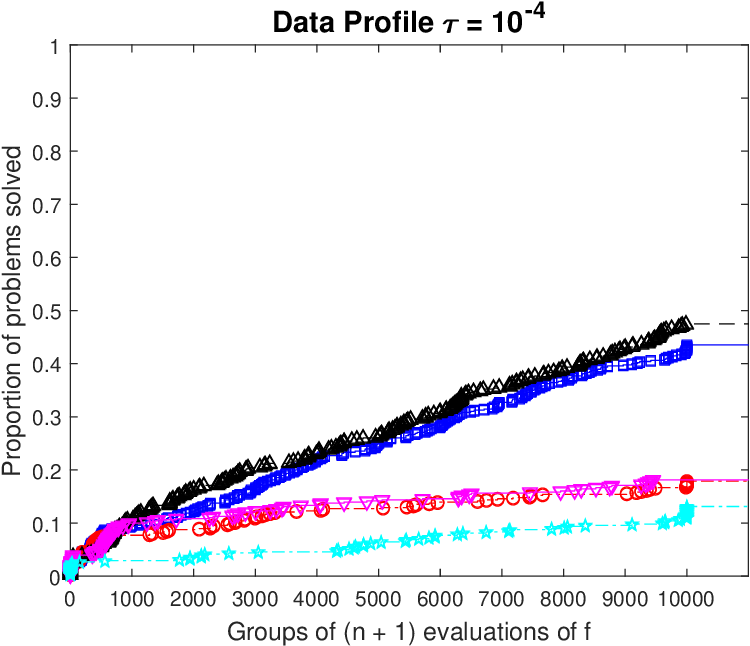}
 	\end{subfigure}
 	\begin{subfigure}[b]{0.34\textwidth}
 		\includegraphics[width=\linewidth]{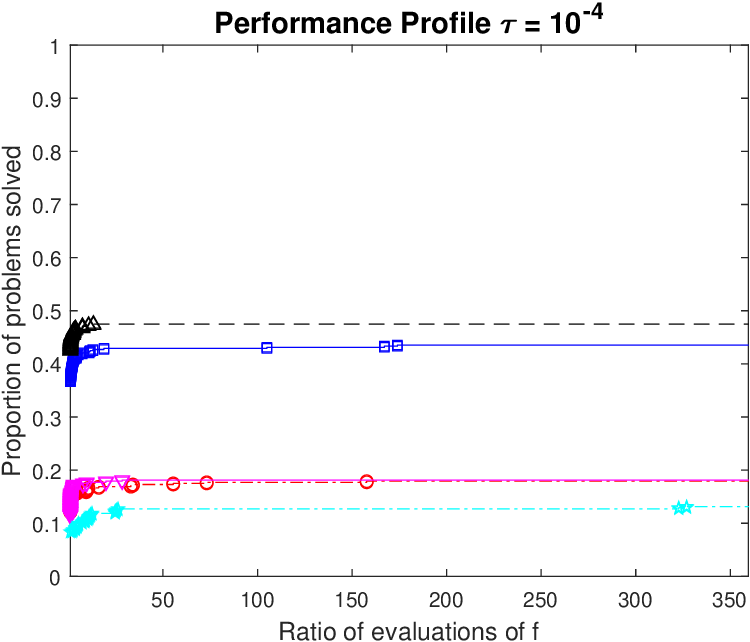}
 	\end{subfigure}
 	\caption{Data and performance profiles for Algorithm \ref{alg:GS} with $q \in \{2, 1.5\}$, Algorithm \ref{alg:DFO-TRNS} with $q \in \{2, 1.5\}$ and StoMADS.}
 	\label{fig:overall}
 \end{figure}

	\section{Concluding remarks and future work}\label{conclusions}

	This manuscript proposed a new tail bound condition for function
estimation in stochastic derivative-free optimization, provably weaker than probabilistic conditions appearing in previous works. We showed how this condition can be obtained under a finite moment assumption on the black-box noise, generalizing finite variance. This naturally led to defining a trade-off between noise moment and number of samples per iteration, generalizing the classic $O(\Delta_k^{-4})$ sample bound of the finite variance case, with improvements for higher moments. 

Our tail bound assumption allowed us to obtain convergence of both a direct-search and a trust-region method \damiano{using a reduced number of samples per iteration}. Surprisingly, unlike prior works on stochastic DFO requiring multiple probabilistic conditions for convergence, in this work a single tail bound is sufficient to prove that the sequence of stepsizes/radii converges to~0, and to conclude convergence to Clarke stationary points. 

There are a few future research developments. 
A first one is the analysis of trust-region algorithms based on non-smooth random local models under the new conditions. Possible choices of the model include piecewise linear models and random smooth functions like those used in Bayesian optimization. Studying tailored models for special cases where the objective is the non-smooth composition of smooth functions (like for instance the maximum of smooth functions) is a related challenge.
Other possible research topics include the extension of our analysis to the constrained case, its integration within global optimization schemes, and numerical tests for non-smooth versions of the trust-region scheme.


	\clearpage
	\bibliographystyle{plain}
	\bibliography{sds} 	
	\appendix
	\section{Appendix}\label{appendix}
In this appendix, we report the missing proofs and some additional numerical results.

\subsection{Proofs}

We now recall the Rosenthal inequality \cite[Equation (1)]{ibragimov2002exact}, together with a corollary useful for several results of Section \ref{s:si}. This inequality states that, if $\{Z_i\}_{i \in [1:p]}$ is a sequence of independent random variables with 0 mean and finite $r$-th moment, $r > 2$, and $S = \frac{1}{p}\sum_{i=1}^p Z_i$, one has
\begin{equation}\label{eq:rostrue}
\me\left[|S|^r\right] \leq p^{-r} C_r \max \left(\sum_{i = 1}^{p} \me\left[|Z_i|^r \right],  \left(\sum_{i = 1}^{p} \me\left[|Z_i|^2 \right]  \right)^{\frac{r}{2}} \right) \, ,
\end{equation}
for some constant $C_r > 0$ depending from $r$. \\
We report here the corollary, which concerns the special case of i.i.d. samples. 
\begin{proposition}
	If $\{Z_i\}$ is a sequence of independent copies of a random variable $Z$, $r \geq 2$ and $S$ are defined as above, then 
	 \begin{equation} \label{eq:roscor}
		\me\left[|S|^r\right] \leq C_r p^{-\frac{r}{2}}\me\left[|Z|^r\right] \, .
	\end{equation}
\end{proposition}
\begin{proof}
	For $r = 2$, the result trivially holds with $C_r = 1$, since
	\begin{equation*}
		\me\left[|S|^2\right] = p^{-2} \sum_{i = 1}^p \me\left[Z_i^2\right] = p^{-1} \me\left[Z^2\right] \, .
	\end{equation*}
Under the assumptions of this proposition \eqref{eq:rostrue} reduces to
\begin{equation} \label{eq:rosiid}
	\me\left[|S|^r\right] \leq p^{-r} C_r \max \left(p\me\left[|Z|^r\right] , p^{\frac{r}{2}}\me\left[|Z|^2 \right]^{\frac{r}{2}} \right) \, .
\end{equation}
Now,
\begin{equation} \label{eq:rosint}
	\begin{aligned}
		& p^{-r} C_r \max \left(p\me\left[|Z|^r\right] , p^{\frac{r}{2}}\me\left[|Z|^2  \right]^{\frac{r}{2}} \right) \\
		& \leq C_r p^{-r}  \max \left( p \me\left[|Z|^r\right], p^{\frac{r}{2}} \me\left[|Z|^r \right] \right) \leq C_r p^{-\frac{r}{2}}\me\left[|Z|^r\right] \,,
	\end{aligned}
\end{equation}
where Jensen's inequality is used on the second argument of the max operator in the first inequality and $r\geq 2$ and $p \geq 1$ in the second inequality. By concatenating \eqref{eq:rosiid} and~\eqref{eq:rosint}, \eqref{eq:roscor} is proved.
\end{proof}


\begin{proof}[Proof of Theorem \ref{th:a1}]
	Let $\bar{F}_k = F_k - f(X_k)$ and $\bar{F}_k^g = F_k^g - f(X_k + \Delta_k G_k) $, for $F_k$ and $F_k^g$ average of $p_k$ samples, with $\xi_{k, i}$ and $\xi^g_{k, i}$ independent samples for $i \in [1:p]$:
		\begin{equation*}
			\begin{aligned}
							F_k & = \frac{1}{p_k} \sum_{i = 1}^{p_k} F(X_k, \xi_{k, i}) \\
							F_k^g & = \frac{1}{p_k} \sum_{i = 1}^{p_k} F(X_k+ \Delta_k G_k, \xi^g_{k, i}) \, .
			\end{aligned}
		\end{equation*} 
  
	We start with the case $q > 2$, implying $r \in (1, 2)$. By the conditional version of \cite[Theorem 2]{von1965inequalities}, we have
	\begin{equation}\label{eq:von}
		\me\left[|\bar{A}_k|^r \ | \ \F_{k - 1}\right] \leq 2M_r p_k^{1 - r}
	\end{equation}
	for $\bar{A}_k = \bar{F}_k, \bar{F}_k^g$. 
	Let now $A_k = \bar{F}_k - \bar{F}_k^g$. We can then prove
	\begin{equation}\label{eq:rmoment}
		\me\left[|A_k|^r \ | \ \F_{k - 1}\right] \leq 2^{r - 1}\me\left[|\bar{F}_k|^r + |\bar{F}_k^g|^r \ | \ \F_{k - 1}\right] \leq 2^{r + 1}M_r p_k^{1 - r} \, ,
	\end{equation}
	where we used $(|a| + |b|)^r \leq 2^{r - 1} (|a|^r + |b|^r)$ for $a,b \in \R$ in the first inequality, and \eqref{eq:von} in the second.
	 Applying \eqref{eq:rmoment} we obtain 
	\begin{equation}\label{eq:alpha2tb}
		\begin{aligned}
			& \Pb\left(|A_k| \geq \alpha \Delta_k^{\frac{r}{r - 1}} \ | \ \F_{k - 1}\right) = \Pb\left(|A_k|^r \geq \alpha \Delta_k^{r^2/{r - 1}} \ | \ \F_{k - 1}\right)   \\ 
			& \leq \frac{\me\left[ |A_k|^r \ | \ \F_{k - 1}\right]}{\alpha^r \Delta_k^{r^2/(r - 1)}} \leq  2^{r + 1}M_r  \frac{p_k^{1 - r}}{\alpha^r\Delta_k^{r^2/(r - 1)}} \, ,
		\end{aligned}
	\end{equation}
	where for $p_k = O(\Delta_k^{-\frac{r^2}{(r - 1)^2}}) = O(\Delta_k^{- q^2})$ the right-hand side of~\eqref{eq:alpha2tb} is $O(1/\alpha^r)$ and Assumption~\ref{ass:2} follows.
 
 	We now deal with the case $q \in (1,2]$, corresponding to $r \in [2, + \infty)$. We will apply the conditional version of~\eqref{eq:roscor} with $S=\bar{F}_k$ and
  $Z = F(X_k, \xi) - f(X_k)$, and write
\begin{equation} \label{eq:rosenthal1}
	\begin{aligned}
	 \me\left[|\bar{F}_k|^r \ | \ \F_{k - 1}\right] \leq  C_r p_k^{-\frac{r}{2}}\me\left[|Z|^r \ | \ \F_{k - 1}\right] \leq C_r M_r p_k^{-\frac{r}{2}} \, ,
 \end{aligned}
	\end{equation} 
where we used~\eqref{eq:mbound} in the second inequality. Of course \eqref{eq:rosenthal1} holds with $\bar{F}_k^g$ instead of $\bar{F}_k$ as well. Then, reasoning as in \eqref{eq:von}, we get
	\begin{equation*}
			\me\left[|A_k|^r \ | \ \F_{k - 1}\right] \leq 2^{r} C_r M_r p_k^{-\frac{r}{2}} \, ,
	\end{equation*}
	and analogously to \eqref{eq:alpha2tb}:
	\begin{equation*}
		\begin{aligned}
			& \Pb\left(|A_k| \geq \alpha \Delta_k^{\frac{r}{r-1}} \ | \ \F_{k - 1}\right) =  \Pb\left(|A_k|^r \geq \alpha^r \Delta_k^{\frac{r^2}{r - 1}} \ | \ \F_{k - 1}\right) \\
			& \leq \frac{\me\left[|A_k|^r \ | \ \F_{k - 1}\right]}{\alpha^r \Delta_k^{r^2/(r - 1)}} \leq \frac{2^r C_r M_r p_k^{-\frac{r}{2}}}{\alpha^r\Delta_k^{r^2/(r - 1)}} \, .	
		\end{aligned}
	\end{equation*}
	In particular, for $p_k = O(\Delta_{k}^{\frac{-2r}{r - 1}}) = O(\Delta_k^{-2q})$, we retrieve Assumption \ref{ass:2}. 
\end{proof}

\begin{proof}[Proof of Proposition \ref{p:gaussian}]
By setting $x = y$ in \eqref{eq:quadratickernel} we get
\begin{equation} \label{eq:var}
		\tx{Var}_\xi[F(x, \xi)] = \sigma^2 \, .
	\end{equation}
Moreover, we have
\begin{equation} \label{eq:cov}
	\tx{Cov}_\xi(F(x, \xi), F(y, \xi)) = \sigma^2\tx{exp}\left( -\frac{\n{x - y}^2}{2l^2}\right)	\geq \sigma^2(1-\frac{\n{x - y}^2}{2l^2}) \, ,
\end{equation}
where we used \eqref{eq:quadratickernel} in the equality and $e^x \geq 1 + x$ in the inequality. 
We therefore have 
\begin{equation} \label{ass:corr2}
	\begin{aligned}
		& \me_{\xi}\left[|\bar{F}(x, \xi) - \bar{F}(y, \xi)|^2\right] = \me_{\xi}\left[\bar{F}(x, \xi)^2\right] + \me_{\xi}\left[\bar{F}(y, \xi)^2\right] - 2\me_{\xi}\left[\bar{F}(x, \xi)\bar{F}(y, \xi)\right] \\
		& = \tx{Var}_{\xi}(F(x, \xi)) + \tx{Var}_{\xi}(F(y, \xi)) - 2\tx{Cov}_{\xi}(F(x, \xi), F(y, \xi)) \\
		& \leq 2\sigma^2 - 2\sigma^2(1-\frac{\n{x - y}^2}{2l^2}) = \frac{\sigma^2}{l^2}\n{x - y}^2 \, .	
	\end{aligned}
\end{equation}
where we applied \eqref{eq:var} and \eqref{eq:cov} in the last inequality. Let now $V_{x, y} = \me_{\xi}\left[|\bar{F}(x, \xi) - \bar{F}(y, \xi)|^2\right] = \tx{Var}_{\xi}\left[\bar{F}(x, \xi) - \bar{F}(y, \xi)\right]$. $(F (x, \xi), F (y, \xi))$ is a bivariate Gaussian vector since $\{(F (x, \xi)\}$ is a Gaussian process. Thus,
the linear combination $F (x, \xi)-F (y, \xi)$ is still Gaussian, whence $\bar{F}(x, \xi)-\bar{F}(y, \xi)$ is Gaussian
with mean $0$. In particular, we can write $\bar{F}(x, \xi) - \bar{F}(y, \xi) = \sqrt{V_{x, y}}N$, with $N$ having standard normal distribution. 
We conclude by noticing, for $r \geq 2$,
\begin{equation*}
	\me_{\xi}\left[|\bar{F}(x, \xi) - \bar{F}(y, \xi)|^r\right] = \me\left[V_{x, y}^{\frac{r}{2}}|N|^{r}\right] = V_{x, y}^{\frac{r}{2}} \me\left[|N|^{r}\right] = V_{x, y}^{\frac{r}{2}} \bar{M}_r 	 \leq   \frac{\sigma^r}{l^r}\n{x - y}^r \bar{M}_r \, ,
\end{equation*}
where we used \eqref{ass:corr2} in the second inequality, and $\bar{M}_r$ is the $r$-th moment of the absolute value of a normal distribution with mean $0$ and variance $1$.
\end{proof}

\begin{proof}[Proof of Theorem \ref{th:corrbound}.]
 Let $A_k$ be an estimate of the difference between the errors at the current and the tentative points obtained with $p_k$ samples:
\begin{equation*}
	A_k = \frac{1}{p_k} \sum_{i = 1}^{p_k} (\bar{F}(X_k, \xi_{k, i}) - \bar{F}(X_k+\Delta_kG_k, \xi_{k, i})) \, .
\end{equation*}
Then, for $Z = \bar{F}(X_k, \xi) - \bar{F}(X_k+\Delta_kG_k, \xi)$, and $C_r$ constant depending only on $r$
\begin{equation} \label{eq:corre2bound}
	\begin{aligned}
	\me\left[|A_k|^r \ | \ \F_{k - 1}\right] \leq C_r p_k^{-\frac{r}{2}}\me\left[|Z|^r \ | \ \F_{k - 1}\right] \leq D_rC_rp_k^{-\frac{r}{2}}\n{\Delta_{k}G_k}^{r} = D_rC_rp_k^{-\frac{r}{2}} \Delta_{k}^r  \, ,
	\end{aligned}
\end{equation}
where we used the conditional version of \eqref{eq:roscor} in the first inequality, \eqref{ass:corr} in the second inequality, and $\n{G_k} = 1$ in the equality.

We thus have
\begin{equation*}
	\begin{aligned}
		&\Pb\left(|A_k| \geq \alpha \Delta_{k}^{\frac{r}{r - 1}} \ | \ \F_{k - 1}\right) = \Pb\left(|A_k|^r \geq \alpha^r \Delta_k^{\frac{r^2}{r - 1}}  \ | \ \F_{k - 1}\right) \\
	& \leq \frac{\me\left[|A_k|^r \ | \ |F_{k-1}\right]}{\alpha^r \Delta_{k}^{r^2/(r - 1)}} \leq \frac{D_rC_r\Delta_k^{-\frac{r}{r-1}}}{p_k^{\frac{r}{2}}\alpha^r} \, ,	
	\end{aligned}
\end{equation*}
where we used the conditional Chebyshev's inequality in the first inequality, and~\eqref{eq:corre2bound} in the last inequality. Hence we obtain Assumption \ref{ass:2} for $p_k = O(\Delta_k^{-\frac{2}{r - 1}}) = O(\Delta_k^{2 - 2q})$ as desired. 
\end{proof}
	
\begin{proof}[Proof of Proposition \ref{p:2eq}.]
	First, notice that  
	\begin{equation} \label{ineq:main}
		\begin{aligned}
			& \me\left[|\FMK - \FMG- (f(X_k) - f(X_k + \Delta_k G_k))|^2 \ | \ \F_{k - 1}\right]  \\
			& \leq  2(\me\left[|\FMG- f(X_k + \Delta_k G_k)|^2 \ | \ \F_{k - 1}\right] + \me\left[|\FMK - f(X_k)|^2 \ | \ \F_{k - 1}\right]) \\
			& \leq 4 k_f^2\Delta_k^4 \, ,	
		\end{aligned}
	\end{equation}
	where we used $(a + b)^2 \leq 2(a^2 + b^2)$ for $a, b \in \R$ in the first inequality, and \eqref{eq:c1} in the second.	
	We now have 
	\begin{equation*} 
		\begin{aligned}
			&  \Pb[|\FMK - \FMG- (f(X_k) - f(X_k + \Delta_k G_k))| \geq \alpha \Delta_k^2 \ | \ \F_{k - 1}] \\
			&	=  \Pb[|\FMK - \FMG- (f(X_k) - f(X_k + \Delta_k G_k))|^2 \geq \alpha^2 \Delta_k^4 \ | \ \F_{k - 1}] \\
			&	\leq  \frac{\me\left[|\FMK - \FMG- (f(X_k) - f(X_k + \Delta_k G_k))|^2  \ | \ \F_{k - 1}\right]}{\alpha^2 \Delta_k^4} \leq \frac{4 k_f^2}{\alpha^2} \, ,		
		\end{aligned}
	\end{equation*}
	where we used the conditional Chebyshev's inequality in the first inequality, and \eqref{ineq:main} in the second inequality.	By setting $\eq = 4k_f^2$ in the above equation we obtain
	\begin{equation*}
		\Pb[|\FMK - \FMG- (f(X_k) - f(X_k + \Delta_k G_k))| \geq \alpha \Delta_k^2 \ | \ \F_{k - 1}] \leq \frac{\eq}{\alpha^2}
	\end{equation*}		
	as desired.
\end{proof}

\begin{proof}[Proof of Proposition \ref{p:2eq2}.]
Notice that \eqref{eq:asp2} is trivially satisfied for $\alpha < \sqrt{\eq}$. We then just need to deal with the case $\alpha \geq \sqrt{\eq}$. First observe that by the triangular inequality 
	\begin{equation*}
		|\FMK  - f(X_k)| + |\FMG- f(X_k + \Delta_k G_k)| \geq |\FMK - \FMG- (f(X_k) - f(X_k + \Delta_k G_k))| \, ,
	\end{equation*}
 which proves in particular \eqref{eq:pineq}.
	Let $\alpha \geq \sqrt{\eq} $ be arbitrary. For $\beta = 1 - \frac{\eq}{\alpha^2}\bar{p} \in [1 - \bar{p}, 1)$,
	\begin{equation*}
		\begin{aligned}
			&	\Pb\left(|\FMK - \FMG- (f(X_k) - f(X_k + \Delta_k G_k))| \geq \alpha \Delta_{k}^2 \ | \F_{k - 1}\right) \\
			&	=  1 - \Pb\left(|\FMK - \FMG- (f(X_k) - f(X_k + \Delta_k G_k))| < \alpha \Delta_{k}^2 \ | \F_{k - 1}\right) \\
			& \leq  1 - \Pb\left(\{|\FMK  - f(X_k)| \leq \tau_f(\beta) \Delta_k^2 \} \cap \{|\FMG- f(X_k + \Delta_k G_k)| \leq \tau_f(\beta) \Delta_k^2 \} \ | \ \F_{k - 1}\right) \\ 
			&	\leq  1-\beta = \frac{\eq}{\alpha^2} \bar{p} \leq \frac{\eq}{\alpha^2} \, ,		
		\end{aligned}
	\end{equation*}
	where the second inequality follows from \eqref{betaprob}, and we were able to apply \eqref{eq:pineq} in the first inequality since by assumption $$\tau_f(\beta) < \frac{1}{2}\sqrt{\frac{\varepsilon}{1 - \beta}}= \frac{1}{2}\sqrt{\frac{\varepsilon \alpha^2}{\eq \bar{p}}} = \frac{\alpha}{2} \, ,$$ 
	using $\eq = \frac{\varepsilon}{\bar{p}}$ in the last equality. Given that $\alpha \geq \sqrt{\eq}$ is arbitrary, this concludes the proof.
\end{proof}

\subsection{Benchmark problems}\label{sec:bench}

\textcolor{white}{T}

 \begin{table}[H]
  	\centering
  	\caption{Problems used in numerical experiments.}
  	\begin{minipage}{0.49\textwidth}
  		\centering
  		\tiny{
  			\begin{tabular}{|l|c|c|}
  				\hline
  				name         & dimension & reference              \\
  				\hline
crescent           & 2                  & \cite{more2009benchmarking} \\
cb2                & 2                  & \cite{lukvsan2000test}                                            \\
charconn1          & 2                  & \cite{liuzzi2006derivative}                                            \\
charconn2          & 2                  & \cite{liuzzi2006derivative}                                            \\
demyanov-malozemov & 2                  & \cite{liuzzi2006derivative}                                           \\
dennis-woods       & 2                  & \cite{dennis1987optimization}                                           \\
wong1              & 7                  & \cite{lukvsan2000test}      \\
wong2              & 10                 & \cite{lukvsan2000test}      \\
wong3              & 20                 & \cite{lukvsan2000test}      \\
elattar            & 6                  & \cite{lukvsan2000test}                                            \\
goffin             & 50                 & \cite{lukvsan2000test}                                            \\
hald-madsen 1      & 2                  & \cite{liuzzi2006derivative}                                          \\
lq                 & 2                  & \cite{lukvsan2000test}                                            \\
ql                 & 2                  & \cite{lukvsan2000test}                                           \\
maxl               & 20                 & \cite{lukvsan2000test}      \\
maxq               & 20                 & \cite{more2009benchmarking} \\
mifflin 1          & 2                  & \cite{karmitsa2007test}     \\
mifflin 2          & 2                  & \cite{karmitsa2007test}     \\
rosen-suzuki       & 4                  & \cite{lukvsan2000test}                                            \\
wf                 & 2                  & \cite{lukvsan2000test}                                            \\
spiral             & 2                  & \cite{lukvsan2000test}                                            \\
evd 52             & 3                  & \cite{lukvsan2000test}                                            \\
kowalik-osborne    & 4                  & \cite{lukvsan2000test}                                            \\
oet 5              & 4                  & \cite{lukvsan2000test}                                           \\
oet 6              & 4                  & \cite{lukvsan2000test}                                            \\
gamma              & 4                  & \cite{lukvsan2000test}      \\
exp                & 5                  & \cite{lukvsan2000test}      \\
pbc1               & 5                  & \cite{lukvsan2000test}                                          \\
evd61              & 6                  & \cite{lukvsan2000test}                                            \\
filter             & 9                  & \cite{lukvsan2000test}      \\
polak 2            & 10                 & \cite{lukvsan2000test}      \\
polak 3            & 11                 & \cite{lukvsan2000test}      \\
polak 6            & 4                  & \cite{lukvsan2000test}      \\

  				\hline
  			\end{tabular}
  		}
  		\label{tab:1a}
  	\end{minipage}
  	\hfill
  	\begin{minipage}{0.49\textwidth}
  		\centering
  		\tiny{
  			\begin{tabular}{|l|c|c|}
  				\hline
  				name         & dimension & reference              \\
  				\hline
watson             & 20                 & \cite{lukvsan2000test}      \\
osborne 2          & 11                 & \cite{lukvsan2000test}      \\
shor               & 5                  & \cite{lukvsan2000test}                                           \\
colville 1         & 5                  & \cite{lukvsan2000test}                                           \\
hs 78              & 5                  & \cite{lukvsan2000test}                                           \\
maxquad            & 10                 & \cite{lukvsan2000test}      \\
gill               & 10                 & \cite{lukvsan2000test}      \\
mxhilb             & 50                 & \cite{karmitsa2007test}     \\
l1hilb             & 50                 &   \cite{lukvsan2000test}                                        \\
davidon 2          & 4                  & \cite{lukvsan2000test}                                           \\
shelldual          & 15                 & \cite{lukvsan2000test}      \\
steiner 2          & 12                 & \cite{lukvsan2000test}      \\
transformer        & 6                  & \cite{lukvsan2000test}                                           \\
polak 6.10         & 1                  & \cite{lukvsan2000test}                                    \\
wild1              & 20                 & \cite{karmitsa2007test}     \\
wild2              & 20                 & \cite{karmitsa2007test}     \\
wild3              & 20                 & \cite{karmitsa2007test}     \\
wild19             & 20                 & \cite{karmitsa2007test}     \\
wild11             & 20                 & \cite{karmitsa2007test}     \\
wild16             & 20                 & \cite{karmitsa2007test}     \\
wild20             & 20                 & \cite{karmitsa2007test}     \\
wild15             & 20                 & \cite{karmitsa2007test}     \\
wild21             & 20                 & \cite{karmitsa2007test}     \\
maxq               & \{10, 20, 30, 40\} & \cite{karmitsa2007test}     \\
l1hilb             & \{10, 20, 30, 40\} & \cite{more2009benchmarking} \\
lq                 & \{10, 20, 30, 40\} & \cite{more2009benchmarking} \\
cb3                & \{10, 20, 30, 40\} & \cite{more2009benchmarking} \\
cb32               & \{10, 20, 30, 40\} & \cite{more2009benchmarking} \\
af                 & \{10, 20, 30, 40\} & \cite{more2009benchmarking} \\
brown              & \{10, 20, 30, 40\} & \cite{more2009benchmarking} \\
mifflin2           & \{10, 20, 30, 40\} & \cite{more2009benchmarking} \\
crescent           & \{10, 20, 30, 40\} & \cite{more2009benchmarking} \\
crescent2          & \{10, 20, 30, 40\} & \cite{more2009benchmarking} \\
  				\hline
  			\end{tabular}
  		}
  		\label{tab:1b}
  	\end{minipage}
  \label{tab:1}
  \end{table}

\end{document}